\newtheorem{theorem}{Theorem}
\newtheorem{lemma}{Lemma}
\title{Space-time multiscale model reduction for transport equations}
\author{
Eric T. Chung \thanks{Department of Mathematics,
The Chinese University of Hong Kong (CUHK), Hong Kong SAR. Email: {\tt tschung@math.cuhk.edu.hk}.
The research of Eric Chung is supported by Hong Kong RGC General Research Fund (Project 14317516)
and CUHK Direct Grant for Research 2016-17.}
\and
Yalchin Efendiev \thanks{Department of Mathematics \& Institute for Scientific Computation (ISC),
Texas A\&M University,
College Station, Texas, USA. Email: {\tt efendiev@math.tamu.edu}.}
\and
Yanbo Li \thanks{Department of Mathematics, Texas A\&M University, College Station, TX 77843. Email: {\tt lyb@tamu.edu}.}
}
\begin{document}
\maketitle{}
\begin{abstract}
In this paper, we propose a space-time GMsFEM for transport equations.
Multiscale transport equations occur in many geoscientific applications,
which include subsurface transport, atmospheric pollution transport, and 
so on.
Most of existing multiscale approaches use spatial multiscale 
basis functions or upscaling, 
and there are very few works that design space-time multiscale functions
to solve the transport equation on a coarse grid.
For the time dependent problems, the use of space-time 
multiscale basis functions offers several advantages as
the spatial and temporal scales are intrinsically coupled.
By using the GMsFEM idea with a space-time framework, 
one obtains a better dimension reduction taking 
into account features of the solutions
in both space and time. 
In addition, the time-stepping can be performed using much 
coarser time step sizes compared to the case when 
spatial multiscale basis are used.
Our scheme is based on space-time snapshot spaces and
model reduction using space-time spectral problems derived from 
the analysis.
We give the analysis for the well-posedness and 
the spectral convergence of our method.
We also present some numerical examples to demonstrate the performance
of the method. In all examples, we observe a good accuracy with a few 
basis functions.
\end{abstract}

\section{\textbf{Introduction}}

Transport processes in practical applications have multiscale nature. 
The convection term in the transport equation is governed by a 
flow velocity field, which can be described by the Darcy equation or 
the steady-state Stokes equation, and the convection velocity is 
typically highly heterogeneous and contains many scales. 
Because of the spatial and magnitude variations of the velocity field,
the transport equation contains both spatial and temporal scales.
For example, high velocity fields in thin channels introduce both spatial
scales related to channel sizes and temporal scales related to 
velocity variations. These scales are tightly coupled in this example, 
as we deal with high convection in small channel like spatial regions.

Transport equations governed or dominated by convection processes
occur in many geoscientific applications. Besides subsurface processes,
the convection-dominated multiscale transport occurs in atmospheric
sciences, where particles are transported by the air. Because 
atmospheric flows can have multiple scales, one deals with multiscale
transport equations with space and time heterogeneities. Other geoscientific
applications include the transport in vadoze zone hydrology and so on.

Numerical solutions for these 
transport equations can be expensive as we need to
resolve both spatial and temporal scales. 
Some type of model reduction can be used to reduce the computational
cost and achieve a certain degree accuracy at a very reduced cost.
Model reduction techniques usually depend on a coarse grid approximation, 
which can be obtained by discretizing the problem on a coarse grid and 
choosing a suitable coarse-grid formulation of the problem. 
In the literature, several approaches have been developed to 
obtain the coarse-grid formulation, including upscaling methods 
\cite{weh02, iliev2013numerical, Iliev_fast_fibrous_10,Ewing_ILRW_SISC_09, Iliev_RW_09, Arbogast_Boyd_06, Arbogast_two_scale_04, Vass_Upscal_11, brandt05, cdgw03} and multiscale methods \cite{Efendiev_GKiL_12, Iliev_MMS_11, Efendiev_Galvis_10, EG09, ArPeWY07, Chu_Hou_MathComp_10, Arbogast_PWY_07, aej07}. 
Among these approaches, GMsFEM (Generalized Multiscale Finite Element Methods) \cite{egh12, ceh2016adaptive, AdaptiveGMsFEM, MixedGMsFEM,chung2017conservative,chen2016least} provides a systematic way of adding degrees of freedom for problems with high contrast and multiple scales.  Most of these approaches have been developed
for diffusion dominated processes. Our goal is to extend these concepts to
transport equations by designing appropriate space-time basis functions.
We note that the proposed problems do not have scale separation and
one can not represent the velocity field by a single ``average'' velocity
field on a coarse grid \cite{hou1992homogenization,weinan1992homogenization}. Appropriate number
of coarse-grid parameters is needed to obtain accurate solutions.

In this paper, we propose a space-time GMsFEM for the transport equations. 
To do so, we start with a coarse space-time grid, which does not 
necessarily resolve the fine-scale heterogeneities. 
Then, we derive a space-time discontinuous Galerkin formulation,
 which uses upwinding for the convection term
and the time derivative. The key component of the scheme is 
the basis functions, which are supported
on space-time coarse elements. 
To construct the basis functions, we apply the general concept of GMsFEM.
In particular, for each coarse space-time element, we first find 
the snapshot space.
We consider two ways to compute the snapshot space. In our first approach, 
we solve the transport equation on each space-time coarse element
with all possible initial and boundary conditions resolved 
on the underlying fine grid. 
In the second approach, we consider an oversampling strategy, in which
we solve the transport equation on oversampled space-time regions. 
Next, we perform local model reduction procedure in order 
to obtain the offline space for the computation of the solution.
In this step, we construct a local spectral problem defined 
on the snapshot space
and identify dominant modes as the basis functions. 
We remark that the spectral problem takes care both the 
space and time structures,
and is designed by our convergence analysis. 

In the paper, we will present the detailed construction of the basis 
functions. In addition, 
we will give analysis for the well-posedness of the discrete system
as well as the spectral convergence of the scheme. 
We have shown that the error is inversely 
proportional to the eigenvalues of the spectral problem.
Furthermore, we illustrate the performance of our 
scheme by a couple of test cases. 
In both cases, we see that our scheme is able 
to produce accurate solutions with only a few multiscale basis functions.
We also compare the performance with the use of space-time 
polynomial basis functions,
and show that our scheme is able to capture the scales of 
the solutions with very few degrees of freedoms. 
We remark that the use of space-time basis functions offers 
some advantages over the use of spatial multiscale basis functions.
In particular, space-time basis functions are able to capture 
the scales in both space and time, when they are tightly coupled.
The latter is the case in the applications of interest.
Besides, space-time approaches 
allow the scheme to update the solution 
with a coarser time step size. 
The success of the space-time basis functions is also 
illustrated by a work for parabolic equations \cite{chung2016spacetime}.

Numerical results are presented in the paper. 
We consider the velocity field obtained by solving flow
equation in highly heterogeneous, high-contrast media.
The media contains high-permeability channels and inclusions,
which introduce several time scales. We solve the transport equation
with some choices of boundary and initial conditions
and compare the fine-grid solution against the multiscale
solution with various number of basis functions. Our numerical
results show that with a few basis functions, we can obtain accurate
results.

The paper is organized as follows.
In Section \ref{sec:method}, we present the construction of the method,
including the space-time formulation and basis function constructions, and prove the well-posedness of the discrete system.
In Section \ref{sec:analysis}, we prove the spectral convergence of the scheme.
We illustrate the performance of the scheme by some numerical examples in Section \ref{sec:numerical}.
The paper ends with a conclusion. 

\section{\textbf{Space-time GMsFEM}}
\label{sec:method}

In this section, 
we will give the construction of our space-time GMsFEM 
for transport equations. 
First, we present some basic notations and the coarse grid formulation
in Section \ref{sec:notation}.
Then, we present the constructions of the space-time multiscale snapshot functions and
basis functions in Section \ref{sec:basis}.

\subsection{Preliminaries}\label{sec:notation}
 
Let $\Omega$ be a bounded domain in $\mathbb{R}^2$ with a Lipschitz boundary $\partial\Omega$ with unit normal vector $\mathbf{n}$,
and $[0,T]$ $(T>0)$ be a time interval. In
this paper, we consider the following transport equation:
\begin{equation}\label{main}
\begin{split}
\frac{\partial u}{\partial t}+\mathbf{v}\cdot \nabla u & =  0\quad \quad\quad \text{in}\;\Omega\times(0,T),
\\
u & =  g \quad \quad\quad\text{on}\;\Gamma^-\times(0,T),\\ 
u(x,0)& =  u_0(x)\,\quad\text{in}\;\Omega\times \left\lbrace t=0 \right\rbrace ,
\end{split}
\end{equation}
where $\mathbf{v}$ is a given divergence-free velocity field, $g$ is the inflow boundary data, $u_0(x)$ is initial condition, $\Gamma^-=\left\lbrace x\in \partial\Omega\,|\, \mathbf{v}\cdot \mathbf{n}<0 \right\rbrace $ is the inflow boundary  
and $\Gamma^+=\left\lbrace x\in \partial\Omega\,|\, \mathbf{v}\cdot \mathbf{n}>0 \right\rbrace $ is the outflow boundary.
We remark that the method presented in this paper can be applied to 3D problems.

The goal of this paper is to develop a space-time generalized multiscale finite element method. The method is motivated by the space-time finite element framework.
First, a space-time variational formulation is defined. Then some space-time multiscale basis functions are constructed.
The constructions of the multiscale basis functions follow two general steps.
In the first step, we will construct 
space-time snapshot functions in order to build a set of possible modes
of the solution. The snapshot functions are obtained by solving 
local problem on coarse space-time cells. 
We consider
 the use of oversampling technique with the aim of reducing the offline cost. 
In the second step, we will construct multiscale basis functions. 
To do so, we design a suitable spectral problem defined in the snapshot space, 
and use the first few dominant eigenfunctions as the basis functions. 
We note that the spectral problems take both space and time into account. 
By using the space-time multiscale basis functions, we obtain a reduced model
which takes into account the variations of the solutions in both space and time,
and thus produces accurate 
solution for the transport equation in heterogeneous media. 


{\color{black}{
Let $\mathcal{T}^h$ be a partition
of the domain $\Omega$ into fine finite elements. 
Here $h>0$ is the fine mesh size. 
The coarse partition,
$\mathcal{T}^H$
of the domain $\Omega$, is formed such that
each element in $\mathcal{T}^H$ is a connected 
union of fine-grid blocks.
More precisely,
 $\forall K_j \in \mathcal{T}^H,\;K_j=\cup_{F\in I_j}F$ 
for some $I_j\subset \mathcal{T}^h$. 
The quantity $H>0$ is the coarse mesh size. 
We will consider rectangular coarse elements and the methodology 
can be used with general coarse elements. An illustration of the mesh notations is shown in Figure \ref{partition} (left).

\begin{figure}[ht]
	\begin{minipage}[t]{0.5\textwidth}
		\centering
		\includegraphics[width=2.6in]{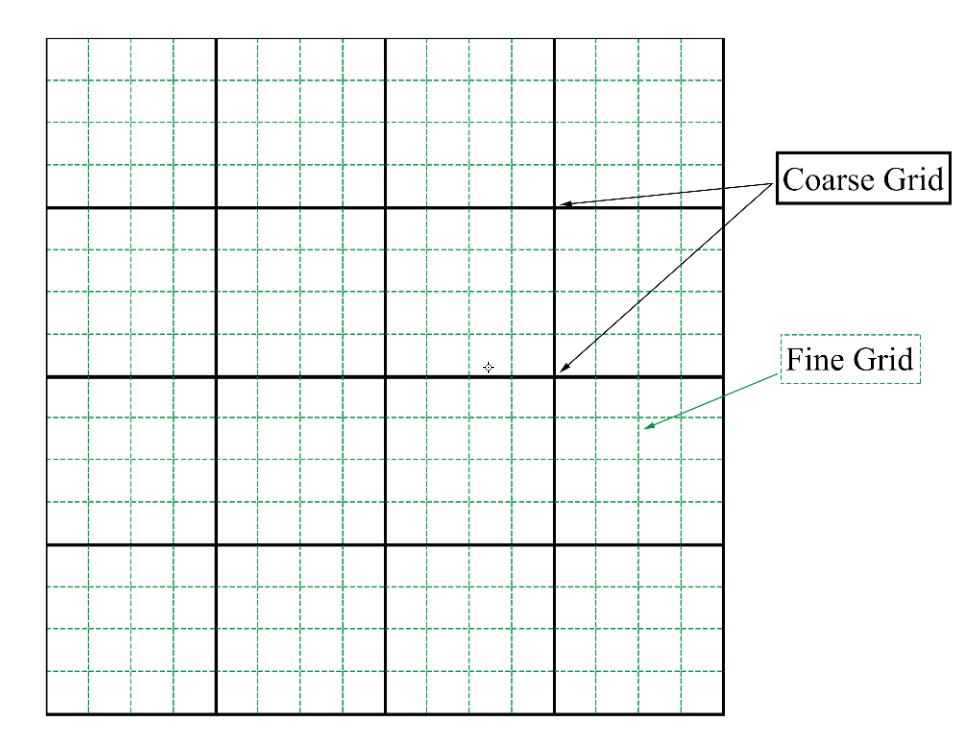}
	\end{minipage}%
	\begin{minipage}[t]{0.5\textwidth}
		\centering
		\includegraphics[width=3.3in]{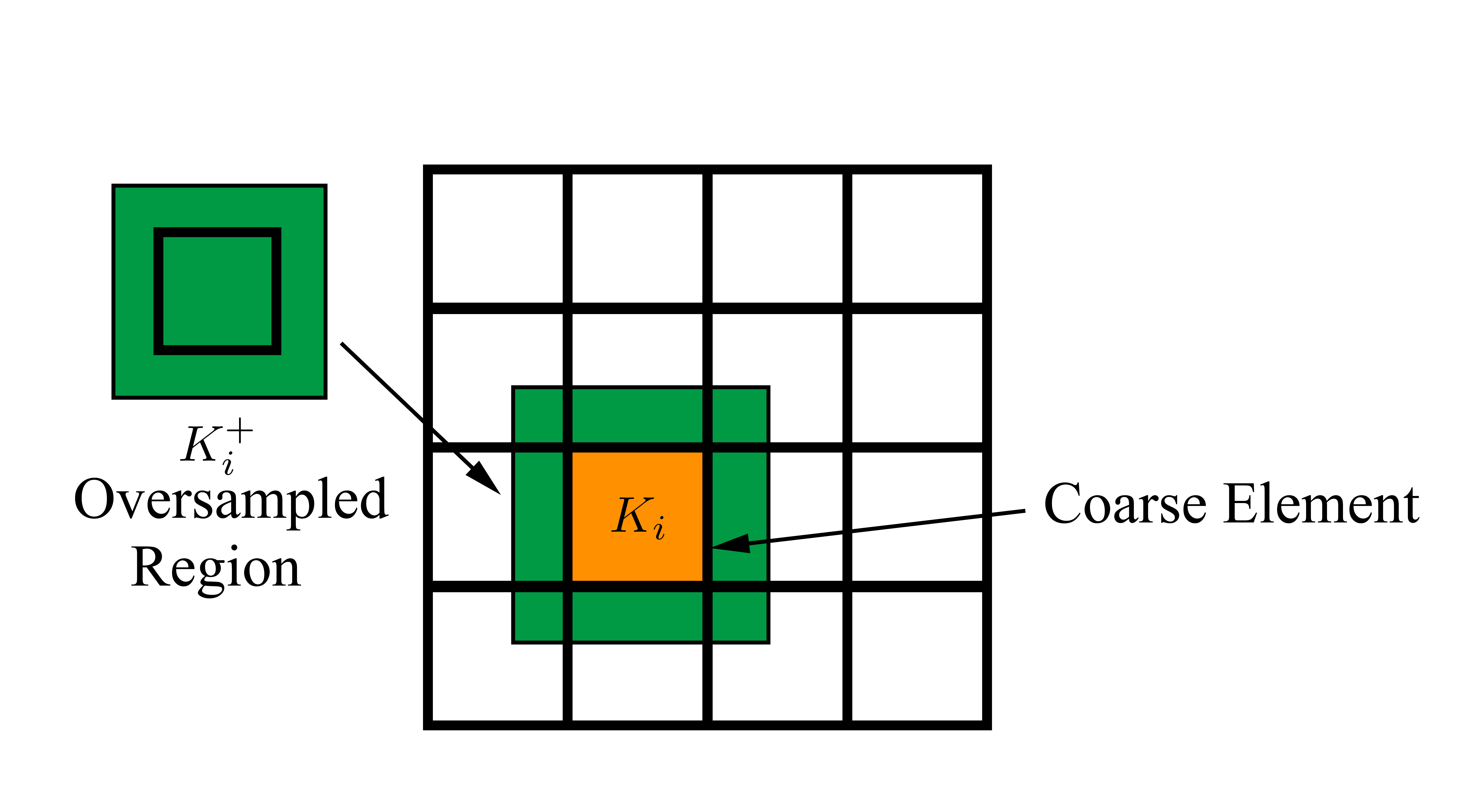}
	\end{minipage}
	\caption{Left: an illustration of fine and coarse grids. Right: an illustration of a coarse neighborhood and a coarse element.}
	\label{partition}
\end{figure}

Next, let $\mathcal{T}^T=\{(T_{n-1},T_n)|1\leq n\leq N\}$ be a coarse partition of $(0,T)$, where
$$0=T_0<T_1<T_2<\cdots<T_N=T$$
and we define a fine partition $\mathcal{T}^t$ of $(0,T)$ by refining the partition $\mathcal{T}^T$, that is  $\mathcal{T}^t=\{(T_{i-\frac{1}{r}},T_{i})|i=\frac{1}{r},\frac{2}{r},\cdots,N-\frac{1}{r},N\}$, where
$$0=T_0<T_{\frac{1}{r}}<\cdots<T_{1-\frac{1}{r}}<T_1<T_{1+\frac{1}{r}}<\cdots<T_N=T$$

To fix the notations, we define the finite element space $V_h$ with respect to $\mathcal{T}^h\times(0,T)$ as a space consists of piecewise linear functions in fine grid. Here we introduce two types of $V_h$.

\subsubsection{CG in coarse cell}\label{sec:CG}
We use the term "coarse cell" to represent $K\times (T_{n-1},T_n)$ where $K$ is a coarse element in space, and $(T_{n-1},T_n)$ is a coarse time interval. In this case, all functions in $V_h$ are continuous in each coarse cell, that is
\begin{align*}
V_h= & \left\{ v\in L^2((0,T);\Omega)\,|\, v=\phi(x)\psi(t)\text{ where } \phi|_K\in Q_1(K)\;\forall K\in \mathcal{T}^h,\,\phi|_K\in C^0(K)\;\forall K\in \mathcal{T} ^H, \right.\\
& \left. \psi|_\tau\in P_1(\tau) \;\forall \tau\in \mathcal{T}^t,\,\text{ and } \psi|_\tau\in C^0(\tau)\;\forall \tau\in \mathcal{T} ^T   \right\}.
\end{align*}

Next, we let $\mathcal{E}_H$ be the collection of all coarse edges, and $\mathcal{E}_H^0=\mathcal{E}_H\backslash \partial \Omega$. For the value on a coarse edge, which is shared by two coarse blocks $K_i$ and $K_j$, if $K_i$ is the upwind block, define $w^+=w|_{K_i}$ and $w^-=w|_{K_j}$ for the corresponding downwind value. Figure \ref{upwind} gives an illustration. 

\begin{figure}[ht]
	\begin{minipage}[t]{1\textwidth}
		\centering
		\includegraphics[width=2in]{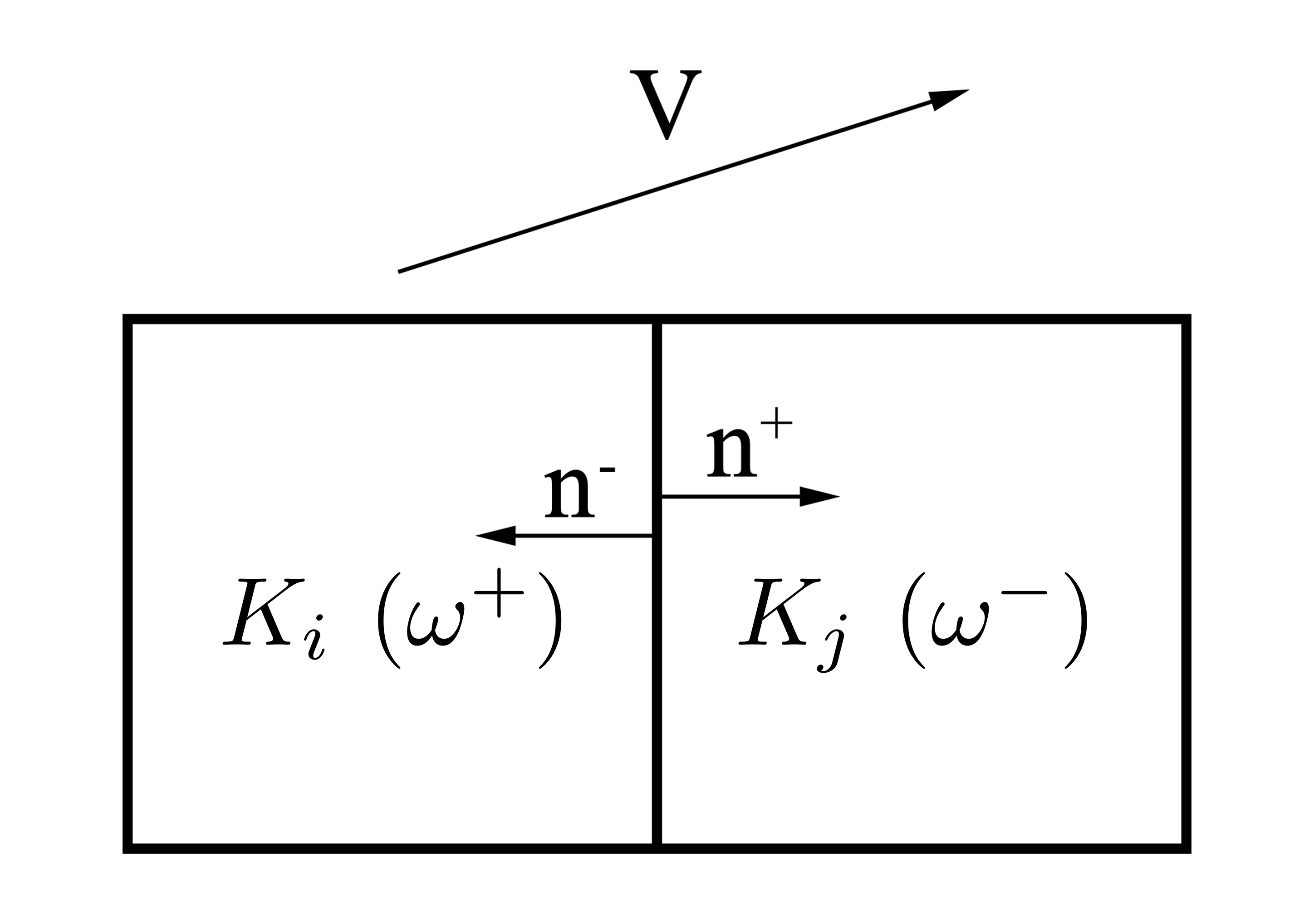}
	\end{minipage}%
	\caption{An illustration of upwind and downwind blocks.}
	\label{upwind}
\end{figure}

The fine-scale solution $u_h\in V_h$ is obtained by solving the following variational problem
\begin{equation}
\label{eq:fine_CG}
\begin{split}
\int_0^T\int_\Omega \left ( \frac{\partial u_h}{\partial t}w-u_h\nabla w\cdot\mathbf{v}  \right )+ \int_0^T\sum_{e\in\mathcal{E}_H^0}\int_e u_h^+[w]\cdot\mathbf{v}+\int_0^T\sum_{e\in{\Gamma^+}}\int_e u_h w\mathbf{v}\cdot\mathbf{n}\\
-\sum_{n=0}^{N-1}\int_\Omega \left [\left [u_h(x,T_n)  \right ]  \right ]w(x,T_n^+)=\int_0^T u_0(x)w(x,T_n^+)-\int_0^T\sum_{e\in{\Gamma^-}}\int_e gw\mathbf{v}\cdot\mathbf{n}, \quad\forall w\in V_h.
\end{split}
\end{equation}
where $[\cdot]$ is the jump operator in space defined by
\begin{equation}\label{space_jump}
[w]=\left\{\begin{matrix}
w^-\mathbf{n}^-+w^+\mathbf{n}^+   & \text{on }\mathcal{E}_H^0,\\
w^-\mathbf{n}^-   & \text{on }\Gamma^-,\\
w^+\mathbf{n}^+   & \text{on }\Gamma^+.
\end{matrix}\right.
\end{equation}
And $[[\cdot]]$ is the jump operator in time defined by
\begin{equation}\label{time_jump}
\left [\left [u_h(x,T_n)  \right ]  \right ]=\left\{\begin{matrix}
u_h(x,T_0^+)   & \text{for }n=0,\\
u_h(x,T_n^+)-u_h(x,T_n^-)   & \text{for }n>0.
\end{matrix}\right.
\end{equation}

The above equation uses an upwind approximation in $\mathbf{v} \cdot \nabla u$ term, and is motivated by \cite{chung2016spacetime} and \cite{riviere2008discontinuous}.
We assume that the fine mesh size $h$ is small enough so that the fine-scale solution $u_h$ is close enough to the
exact solution. 
We will skip the discussion on the well-posedness of (\ref{eq:fine_CG}) since it is similar to that of the coarse scale system to be presented. 
We will also skip the approximation property of the fine-scale solver since it is standard (see for example \cite{riviere2008discontinuous}).

We note that the purpose of this paper is to find a multiscale solution $u_H$ that is a good approximation
of the fine-scale solution $u_h$.

Now we present the general idea of GMsFEM. We will use the space-time finite element method to solve
the system \eqref{main} on the coarse grid.
We will use a similar framework as (\ref{eq:fine_CG}).
That is, we find $u_H\in V_H$ such that
\begin{align}\label{global 0_CG}
\begin{split}
\int_0^T\int_\Omega \left ( \frac{\partial u_H}{\partial t}w-u_H\nabla w\cdot\mathbf{v}  \right )+ \int_0^T\sum_{e\in\mathcal{E}_H^0}\int_e u_H^+[w]\cdot\mathbf{v}+\int_0^T\sum_{e\in{\Gamma^+}}\int_e u_H w\mathbf{v}\cdot\mathbf{n}\\
+\sum_{n=0}^{N-1}\int_\Omega \left [\left [u_H(x,T_n)  \right ]  \right ]w(x,T_n^+)=\int_{\Omega}u_0(x)w(x,T_0^+)-\int_0^T\sum_{e\in{\Gamma^-}}\int_e gw\mathbf{v}\cdot\mathbf{n}\quad\forall w\in V_H,
\end{split}
\end{align}
where $V_H$ is the multiscale finite element space which will be introduced in the following subsections.\\
{\color{black}{
		\quad
		To avoid a large computational cost associated
		with solving the equation \eqref{global 0_CG}, we divide
		the computational domain.
		We assume the solution space $V_H$ is a direct sum of the
		spaces only containing the functions defined on one single coarse time interval $(T_{n-1},T_n)$. We decompose
		the problem \eqref{global 0_CG} into a sequence of problems and find the solution $u_H$ in each time interval sequentially. Our
		coarse space will be constructed in each time interval
		$$V_H=\bigoplus_{n=1}^N V_H^{(n)},$$
		where $V_H^{(n)}$ only contains the functions having
		zero values in the time interval $(0,T)$ except $(T_{n-1},T_n)$,
		namely $\forall v \in V_H^{(n)}$,
		$$v(\cdot,t)=0 \text{ for }t\in(0,T)\backslash(T_{n-1},T_n).$$
}}

The equation \eqref{global 0_CG} can be decomposed into the following problem: find $u_H^{(n)}\in V_H^{(n)}$ (where $V_H^{(n)}$ will be
defined later) satisfying

\begin{equation}\label{global_CG}
\begin{split}
&\int_{T_{n-1}}^{T_n}\int_\Omega \left ( \frac{\partial u_H^{(n)}}{\partial t}w-u_H^{(n)}\nabla w\cdot\mathbf{v}  \right )+\int_{T_{n-1}}^{T_n}\sum_{e\in\mathcal{E}_H^0}\int_e u_H^{(n)+}[w]\cdot\mathbf{v}\\
&+
\int_{T_{n-1}}^{T_n}\sum_{e\in{\Gamma^+}}\int_e u_H^{(n)}w\mathbf{v}\cdot\mathbf{n}
+\int_\Omega u_H^{(n)}(x,T_{n-1}^+)w(x,T_{n-1}^+)\\
=&\int_\Omega f_H^{(n)}(x)w(x,T_{n-1}^+)
-\int_{T_{n-1}}^{T_n}\sum_{e\in{\Gamma^-}}\int_e gw\mathbf{v}\cdot\mathbf{n},\quad\forall w\in V_H^{(n)},
\end{split}
\end{equation}
where
$$f_H^{(n)}(x)=\left\{\begin{matrix}
u_H^{(n-1)}(x,T_{n-1}^-) & \text{for} & n\geq 2 \\
u_0 & \text{for} & n=1.
\end{matrix}\right.$$

Then the solution $u_H$ is the direct sum of all these $u_H^{(n)}$'s, that is $u_H=\bigoplus_{n=1}^N u_H^{(n)}.$
Next, we motivate the use of space-time multiscale basis functions by
comparing it to space multiscale
basis functions.
Let $\left\{T_{n-1},T_{n-1+\frac{1}{r}},\cdots,T_{n-\frac{1}{r}},T_n\right\}$ be $r$ fine time steps
in $(T_{n-1},T_n)$. The solution can be represented as
$u_H^{(n)}=\sum_{l,i}c_{l,i}\phi_l^{K_i}(x,t)$ in the interval $(T_{n-1},T_n)$,
where the number of coefficients $c_{l,i}$ is related to
the size of the reduced system in space-time interval.
If we use spatial multiscale
basis functions,
these multiscale basis functions are constructed
at each fine time interval $(T_{p-\frac{1}{r}},T_p)$, denoted
by $\phi_l^{K_i}(x,T_p)$. The solution $u_H$ spanned
by these basis functions will have a larger dimension
since
each time interval is represented by multiscale basis functions.

\subsubsection{DG in coarse cell}\label{sec:DG}
In this case, all functions in $V_h$ could be discontinuous in
each coarse cell, that is
\begin{align*}
V_h=  \left\{ v\in L^2((0,T);\Omega)\,|\, v=\phi(x)\psi(t)\text{ where } \phi|_K\in Q_1(K)\;\forall K\in \mathcal{T}^h,\, \psi|_\tau\in P_1(\tau) \;\forall \tau\in \mathcal{T}^t   \right\}.
\end{align*}
}}

We let $\mathcal{E}_h$ be the collection of all fine edges, and $\mathcal{E}_h^0=\mathcal{E}_h\backslash \partial \Omega$.
The fine-scale solution $u_h\in V_h$ is obtained by solving the following variational problem
\begin{equation}
\label{eq:fine_DG}
\begin{split}
\int_0^T\int_\Omega \left ( \frac{\partial u_h}{\partial t}w-u_h\nabla w\cdot\mathbf{v}  \right )+ \int_0^T\sum_{e\in\mathcal{E}_h^0}\int_e u_h^+[w]\cdot\mathbf{v}+\int_0^T\sum_{e\in{\Gamma^+}}\int_e u_h w\mathbf{v}\cdot\mathbf{n}\\
+\sum_{p=0}^{N-\frac{1}{r}}\int_\Omega \left [\left [u_h(x,T_p)  \right ]  \right ]w(x,T_p^+)=\int_{\Omega}u_0(x)w(x,T_0^+)-\int_0^T\sum_{e\in{\Gamma^-}}\int_e gw\mathbf{v}\cdot\mathbf{n}, \quad\forall w\in V_h,
\end{split}
\end{equation}
where the jump operators $[\cdot]$ and $\left[[\cdot] \right]$ have similar definition to equation \eqref{space_jump} and \eqref{time_jump}.

As for GMsFEM, we find $u_H\in V_H$ such that
\begin{align}\label{global 0_DG}
\begin{split}
\int_0^T\int_\Omega \left ( \frac{\partial u_H}{\partial t}w-u_H\nabla w\cdot\mathbf{v}  \right )+ \int_0^T\sum_{e\in\mathcal{E}_h^0}\int_e u_H^+[w]\cdot\mathbf{v}+\int_0^T\sum_{e\in{\Gamma^+}}\int_e u_H w\mathbf{v}\cdot\mathbf{n}\\
+\sum_{p=0}^{N-\frac{1}{r}}\int_\Omega \left [\left [u_H(x,T_p)  \right ]  \right ]w(x,T_p^+)=\int_{\Omega}u_0(x)w(x,T_0^+)-\int_0^T\sum_{e\in{\Gamma^-}}\int_e gw\mathbf{v}\cdot\mathbf{n}, \quad\forall w\in V_h,
\end{split}
\end{align}

The equation \eqref{global 0_DG} can be decomposed into the following problem: find $u_H^{(n)}\in V_H^{(n)}$ (where $V_H^{(n)}$ will be
defined later) satisfying

\begin{equation}\label{global_DG}
\begin{split}
&\int_{T_{n-1}}^{T_n}\int_\Omega \left ( \frac{\partial u_H^{(n)}}{\partial t}w-u_H^{(n)}\nabla w\cdot\mathbf{v}  \right )+\int_{T_{n-1}}^{T_n}\sum_{e\in\mathcal{E}_h^0}\int_e u_H^{(n)+}[w]\cdot\mathbf{v}+
\int_{T_{n-1}}^{T_n}\sum_{e\in{\Gamma^+}}\int_e u_H^{(n)}w\mathbf{v}\cdot\mathbf{n}
\\
&+\int_\Omega u_H^{(n)}(x,T_{n-1}^+)w(x,T_{n-1}^+)+\sum_{p=n-1+\frac{1}{r}}^{n-\frac{1}{r}}\int_\Omega \left [\left [u_H^{(n)}(x,T_p)  \right ]  \right ]w(x,T_p^+)\\
=&\int_\Omega f_H^{(n)}(x)w(x,T_{n-1}^+)
-\int_{T_{n-1}}^{T_n}\sum_{e\in{\Gamma^-}}\int_e gw\mathbf{v}\cdot\mathbf{n},\quad\forall w\in V_H^{(n)},
\end{split}
\end{equation}

\subsection{Construction of offline basis functions}\label{sec:basis}

In this section, we will give the constructions of multiscale basis functions. 
In Section \ref{sec:snap}, we will present the construction of the snapshot space. 
To do so, we will solve the transport equation on coarse space-time cells with suitable
initial and boundary conditions. This process will provide a set of functions which
are able to span the fine-scale solution with high accuracy. 
We will also consider the use of the oversampling technique by solving the transport equation
on a domain larger then the target coarse space-time cell. 
Next, in Section \ref{sec:off}, we will present the construction of our multiscale basis functions. 
The construction is based on the design of a local spectral problem which can identify important modes
in the snapshot space. 
Our choice of spectral problem is based on our convergence analysis given later.

\subsubsection{\textbf{Snapshot Space}} \label{sec:snap}
Let $K_i$ be a given coarse element in space. Consider the coarse time interval $(T_{n-1},T_{n})$.
We will construct a snapshot space $V_\text{snap}^{i(n)}$ containing functions defined on coarse cell $K_i \times (T_{n-1},T_{n})$. 
A spectral problem is then solved in the snapshot space to extract the dominant modes in the snapshot space. These dominant modes are the offline multiscale basis functions and the resulting reduced space is called the offline space. 
We will present two choices of $V_\text{snap}^{i(n)}$.

The first
choice for the snapshot spaces consists of solving the transport equation on the target space-time coarse cell $K_i \times (T_{n-1},T_{n})$
for all possible boundary conditions. In
particular, we define the $j$-th snapshot function $\psi_j$ as the solution to the following problem
\begin{align}\label{snapshot1}
\left\{\begin{array}{rl}
\frac{\partial \psi_j}{\partial t}+\mathbf{v}\cdot\nabla\psi_j =0 & \text{in}  \quad K_i\times(T_{n-1},T_{n}),\\ 
\psi_j(x,t) = \delta_{ij}(x,t) & \text{on} \quad \partial(K_i\times(T_{n-1},T_{n})).
\end{array}\right.
\end{align}
Here $\delta_{ij}(x,t)$ is a fine-grid delta function and $\partial(K_i\times(T_{n-1},T_{n}))$ denotes the boundaries $t=T_{n-1}$ and on $(\partial K_i)^-\times(T_{n-1},T_{n})$. Then $V_\text{snap}^{i(n)}$ consists of all $\psi_j$'s.

To improve the accuracy of the solution, we can take an advantage of oversampling concepts. We denote by $K_i^+$ the oversampled space region of $K_i \subset K_i^+$, defined by adding several fine- or coarse-grid layers around $K_i$ 
(see Figure \ref{partition}). Also, we define $(T_{n-1}^*,T_{n})$ as the left-side oversampled time region for $(T_{n-1},T_{n})$. We generate our second choice of the 
snapshot space on the oversampled space-time region $K_i^+\times(T_{n-1}^*,T_{n})$ by solving
\begin{align}\label{snapshot2}
\left\{\begin{array}{rl}
\frac{\partial \psi_j^+}{\partial t}+\mathbf{v}\cdot\nabla\psi_j^+= 0 & \text{in}  \quad K_i^+\times(T_{n-1}^*,T_{n})\\ 
\psi_j^+(x,t) = \delta_{ij}(x,t) & \text{on}  \quad \partial(K_i^+\times(T_{n-1}^*,T_{n})).
\end{array}\right.
\end{align}
	Then $V_\text{snap}^{i(n)+}$ consists of all $\psi_j^+$'s, and $V_\text{snap}^{i(n)}$ consists of all $\psi_j=\psi_j^+|_{K_i}$'s.
	Finally, $V_\text{snap}^{(n)}$ is spanned by all functions in each $V_\text{snap}^{i(n)}$, that is
	$$V_\text{snap}^{(n)}=\bigoplus_{K_i}V_\text{snap}^{i(n)}.$$
	We will use the second choice of the snapshot space in the rest of the paper.
	
	For the case in Section \ref{sec:CG}, we define snapshot solution $u_\text{snap}^{(n)}\in V_\text{snap}^{(n)}$ such that
	
	\begin{equation}\label{snap_CG}
	\begin{split}
	&\int_{T_{n-1}}^{T_n}\int_\Omega \left ( \frac{\partial u_\text{snap}^{(n)}}{\partial t}w-u_\text{snap}^{(n)}\nabla w\cdot\mathbf{v}  \right )+\int_{T_{n-1}}^{T_n}\sum_{e\in\mathcal{E}_H^0}\int_e u_\text{snap}^{(n)+}[w]\cdot\mathbf{v}+
	\int_{T_{n-1}}^{T_n}\sum_{e\in{\Gamma^+}}\int_e u_\text{snap}^{(n)}w\mathbf{v}\cdot\mathbf{n}
	\\
	&+\int_\Omega u_\text{snap}^{(n)}(x,T_{n-1}^+)w(x,T_{n-1}^+)+\sum_{n=0}^{N-1}\int_\Omega \left [\left [u_\text{snap}^{(n)}(x,T_n)  \right ]  \right ]w(x,T_n^+)\\
	=&\int_\Omega f_\text{snap}^{(n)}(x)w(x,T_{n-1}^+)
	-\int_{T_{n-1}}^{T_n}\sum_{e\in{\Gamma^-}}\int_e gw\mathbf{v}\cdot\mathbf{n},\quad\forall w\in V_\text{snap}^{(n)},
	\end{split}
	\end{equation}
	where
	$$f_\text{snap}^{(n)}(x)=\left\{\begin{matrix}
	u_\text{snap}^{(n-1)}(x,T_{n-1}^-) & \text{for} & n\geq 2 \\
	u_0 & \text{for} & n=1.
	\end{matrix}\right.$$
	And for the case in Section \ref{sec:DG}, we define snapshot solution $u_\text{snap}^{(n)}\in V_\text{snap}^{(n)}$ such that
	
	\begin{equation}\label{snap_DG}
	\begin{split}
	&\int_{T_{n-1}}^{T_n}\int_\Omega \left ( \frac{\partial u_\text{snap}^{(n)}}{\partial t}w-u_\text{snap}^{(n)}\nabla w\cdot\mathbf{v}  \right )+\int_{T_{n-1}}^{T_n}\sum_{e\in\mathcal{E}^0}\int_e u_\text{snap}^{(n)+}[w]\cdot\mathbf{v}+
	\int_{T_{n-1}}^{T_n}\sum_{e\in{\Gamma^+}}\int_e u_\text{snap}^{(n)}w\mathbf{v}\cdot\mathbf{n}
	\\
	&+\int_\Omega u_\text{snap}^{(n)}(x,T_{n-1}^+)w(x,T_{n-1}^+)+\sum_{p=n-1+\frac{1}{r}}^{n-\frac{1}{r}}\int_\Omega \left [\left [u_\text{snap}^{(n)}(x,T_p)  \right ]  \right ]w(x,T_p^+)\\
	=&\int_\Omega f_\text{snap}^{(n)}(x)w(x,T_{n-1}^+)
	-\int_{T_{n-1}}^{T_n}\sum_{e\in{\Gamma^-}}\int_e gw\mathbf{v}\cdot\mathbf{n},\quad\forall w\in V_\text{snap}^{(n)},
	\end{split}
	\end{equation}
\subsubsection{\textbf{Offline Space}}\label{sec:off}
To obtain the offline multiscale basis functions, we need to perform a space reduction by appropriate spectral problems.
Motivated by our later convergence analysis, we adopt the following spectral problem on $K_i^+\times(T_{n-1}^*,T_{n})$: 
 find $\left( \phi^+,\lambda\right) \in V_\text{snap}^{i(n)+}\times \mathbb{R}$ such that
$$a_n(\phi^+,\eta^+)=\lambda s_n(\phi^+,\eta^+), \;\;\;\forall \eta^+\in V_\text{snap}^{i(n)+},$$
where the bilinear operators $a_n(\phi^+,\eta^+)$ and $s_n(\phi^+,\eta^+)$ are defined as follow:\\
For the case in Section \ref{sec:CG}:
\begin{align*}
a_n(\phi^+,\eta^+)=&\int_{T_{n-1}^*}^{T_n}\int_{K_i^+}  \nabla \phi^+\cdot\nabla \eta^+,\\
s_n(\phi^+,\eta^+)=&\frac{1}{2}\left ( \int_{K_i}\phi^+(x,T_{n-1}^+)\eta^+(x,T_{n-1}^+)+\int_{K_i}\phi^+(x,T_{n}^-)\eta^+(x,T_{n}^-)+\int_{T_{n-1}}^{T_{n}}\int_{\partial {K_i}}\left | \mathbf{v}\cdot\mathbf{n} \right |\phi^+\eta^+ \right).
\end{align*}
For the case in Section \ref{sec:DG}:
\begin{align*}
a_n(\phi,\eta)=&\frac{1}{2}\left(\sum_{p=n-1+\frac{1}{r}}^{n-\frac{1}{r}}\int_{K_i}\left[ \left[ \phi(x,T_p)\right] \right] \left[ \left[ \eta(x,T_p)\right] \right] +\int_{T_{n-1}}^{T_n}\sum_{e\in \mathcal{E}^0(K_i)}\int_e\left | \mathbf{v}\cdot\mathbf{n} \right |[\phi][\eta]\right) ,\\
s_n(\phi,\eta)=&\frac{1}{2}\left ( \int_{K_i}\phi(x,T_{n-1}^+)\eta(x,T_{n-1}^+)+\int_{K_i}\phi(x,T_{n}^-)\eta(x,T_{n}^-)\right. \\
&\left. +\sum_{p=n-1+\frac{1}{r}}^{n-\frac{1}{r}}\int_{K_i}\left(\phi(x,T_p^-)\eta(x,T_p^-)+\phi(x,T_p^+)\eta(x,T_p^+) \right)  +\int_{T_{n-1}}^{T_{n}}\sum_{\tau\subset K_i}\int_{\partial \tau}\left | \mathbf{v}\cdot\mathbf{n} \right |\phi\eta \right).
\end{align*}

We arrange the eigenfunctions $\phi_j^+$'s in ascending order of the corresponding eigenvalues $\lambda_j$'s, and obtain $\phi_j$'s on the target region $K_i\times(T_{n-1},T_n)$ by restricting $\phi_j^+$'s onto $K_i\times(T_{n-1},T_n)$.
Then we select first $L_i$ functions $\phi_1,\phi_2,...,\phi_{L_i}$ to construct local offline space $V_H^{i(n)}$, and perform POD to remove linearly dependent functions. We define $L=\max_i L_i$. 
Finally $V_H^{(n)}$ is spanned by all functions in each $V_H^{i(n)}$, that is
$$V_H^{(n)}=\bigoplus_{K_i}V_H^{i(n)}.$$
This is the approximation space we used to solve the system (\ref{main})
using the formulation (\ref{global_DG}). 

\section{\textbf{Convergence Analysis}}
\label{sec:analysis}

In this section, we will analyze the convergence of our proposed method. We will only consider the case in Section \ref{sec:CG}, the case in Section \ref{sec:DG} will be similar.

First, we will define the following norms
\begin{equation*}
\left \| u \right \|_{V^{(n)}}^2=\frac{1}{2}\left ( \int_\Omega u^2(x,T_{n-1}^+)+\int_\Omega u^2(x,T_{n}^-)+\int_{T_{n-1}}^{T_n}\sum_{e\in \mathcal{E}_H}\int_e\left | \mathbf{v}\cdot\mathbf{n} \right |[u]^2\right )
\end{equation*}
and
\begin{equation*}
\left\| u\right\| _{W^{(n)}}^2=\frac{1}{2}\left ( \int_\Omega u^2(x,T_{n}^-)+\int_\Omega u^2(x,T_{n-1}^+)+\int_{T_{n-1}}^{T_{n}} \sum_{K_i}\int_{\partial K_i}\left| \mathbf{v}\cdot\mathbf{n}\right| u^2 \right ).
\end{equation*}
We will first show that the problem (\ref{global_CG}) is well-posed. 
Then we will prove a best approximation property.
Finally, we will prove an error bound of our method.
To begin our convergence analysis, we write (\ref{global_CG}) as
\begin{equation*}
a(u_H^{(n)},w) = F(w)
\end{equation*}
where 
\begin{equation*}
\begin{split}
a(u,w)=&\int_{T_{n-1}}^{T_n}\int_\Omega \left ( \frac{\partial u}{\partial t}w-u\nabla w\cdot\mathbf{v}  \right )+ 
\int_{T_{n-1}}^{T_n}\sum_{e\in\mathcal{E}_H^0}\int_e u^+[w]\cdot\mathbf{v}\\
& +\int_{T_{n-1}}^{T_n}\sum_{e\in{\Gamma^+}}\int_e uw\mathbf{v}\cdot\mathbf{n}+\int_\Omega u(x,T_{n-1}^+)w(x,T_{n-1}^+)
\end{split}
\end{equation*}
and
\begin{equation*}
F(w) = \int_\Omega f_H^{(n)}(x)w(x,T_{n-1}^+)-\int_{T_{n-1}}^{T_n}\sum_{e\in{\Gamma^-}}\int_e gw\mathbf{v}\cdot\mathbf{n}.
\end{equation*}

In the following theorem, we prove the well-posedness of the scheme (\ref{global_CG}).

\begin{theorem}\label{th1}
	The space-time GMsFEM (\ref{global_CG}) has a unique solution. In addition, we have the following coercivity result 
	\begin{equation*}
	a(u,u)=\left \| u \right \|_{V^{(n)}}^2, \quad \forall u \in V_{\text{snap}}^{(n)}.
	\end{equation*}
\end{theorem}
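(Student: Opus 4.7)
The plan is to first establish the coercivity identity $a(u,u)=\|u\|_{V^{(n)}}^2$ by a direct term-by-term computation, and then deduce well-posedness of \eqref{global_CG} by combining coercivity with finite-dimensionality and the transport structure of the snapshot functions.

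For the coercivity, I will set $w=u$ in the definition of $a$ and handle each of the four terms. The time-derivative term is standard: integrating by parts on $(T_{n-1},T_n)$ gives $\int\partial_t u\cdot u=\tfrac12\bigl(u^2(T_n^-)-u^2(T_{n-1}^+)\bigr)$, which together with the initial term $\int_\Omega u^2(x,T_{n-1}^+)$ produces $\tfrac12\int_\Omega u^2(x,T_n^-)+\tfrac12\int_\Omega u^2(x,T_{n-1}^+)$, matching the two volume contributions in $\|u\|_{V^{(n)}}^2$. The nontrivial work is on the spatial side. Because $\mathbf{v}$ is divergence-free and because $u$ is continuous inside each coarse element $K_i$ (Section~\ref{sec:CG}), I can rewrite
\[
-\int_\Omega u\,\nabla u\cdot\mathbf{v}=-\tfrac12\sum_{K_i}\int_{\partial K_i}u^2\,\mathbf{v}\cdot\mathbf{n}.
\]
I then need to combine this with $\int_{\mathcal{E}_H^0}u^+[u]\cdot\mathbf{v}$ and $\int_{\Gamma^+}u^2\,\mathbf{v}\cdot\mathbf{n}$ edge by edge.

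The main obstacle, and the most delicate bookkeeping, is to verify that on each edge $e$ the signs conspire to give precisely $\tfrac12|\mathbf{v}\cdot\mathbf{n}|[u]^2$. For an interior edge $e\in\mathcal{E}_H^0$ with upwind cell $K_i$ and downwind cell $K_j$, I use $\mathbf{n}^-=-\mathbf{n}^+$ and $\mathbf{v}\cdot\mathbf{n}^+=|\mathbf{v}\cdot\mathbf{n}|$ to write $[u]=(u^+-u^-)\mathbf{n}^+$, so the contribution of the cell-sum becomes $-\tfrac12((u^+)^2-(u^-)^2)|\mathbf{v}\cdot\mathbf{n}|$ and the upwind term becomes $u^+(u^+-u^-)|\mathbf{v}\cdot\mathbf{n}|$; adding them yields $\tfrac12(u^+-u^-)^2|\mathbf{v}\cdot\mathbf{n}|=\tfrac12|\mathbf{v}\cdot\mathbf{n}|[u]^2$. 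On $\Gamma^+$, the $-\tfrac12(u^+)^2\mathbf{v}\cdot\mathbf{n}^+$ piece cancels half of the outflow term, again leaving $\tfrac12|\mathbf{v}\cdot\mathbf{n}|[u]^2$; on $\Gamma^-$, the sign of $\mathbf{v}\cdot\mathbf{n}^-$ gives $\tfrac12(u^-)^2|\mathbf{v}\cdot\mathbf{n}|=\tfrac12|\mathbf{v}\cdot\mathbf{n}|[u]^2$ directly. Summing over all edges completes the identity.

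For well-posedness, since \eqref{global_CG} is a square finite-dimensional linear system on $V_H^{(n)}\subset V_{\text{snap}}^{(n)}$, it suffices to establish uniqueness. If $a(u,w)=0$ for every $w\in V_H^{(n)}$, choosing $w=u$ and applying the coercivity identity (which holds on $V_{\text{snap}}^{(n)}$ hence on the subspace $V_H^{(n)}$) gives $\|u\|_{V^{(n)}}^2=0$. This forces $u(x,T_{n-1}^+)\equiv 0$, $u(x,T_n^-)\equiv 0$, and $[u]\equiv 0$ on every coarse edge where $\mathbf{v}\cdot\mathbf{n}\neq 0$. Because every function in $V_{\text{snap}}^{(n)}$ (and hence in $V_H^{(n)}$) satisfies $\partial_t u+\mathbf{v}\cdot\nabla u=0$ pointwise inside each coarse space-time cell by construction \eqref{snapshot2}, I can propagate along the flow: in a cell whose inflow boundary lies on $\Gamma^-$, the zero initial trace and the vanishing inflow jump (which forces $u^-=0$ on $\Gamma^-$) give $u\equiv 0$; moving downstream, on each interior edge the vanishing of $u^+$ from the upstream cell together with $[u]=0$ forces $u^-=0$, which combined with the zero initial trace yields $u\equiv 0$ in the next cell. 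Iterating across the coarse mesh in the direction of $\mathbf{v}$ produces $u\equiv 0$ throughout $\Omega\times(T_{n-1},T_n)$, giving injectivity and therefore unique solvability.
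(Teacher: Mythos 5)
Your coercivity computation is the same as the paper's: integrate the time-derivative term by parts, use $\operatorname{div}\mathbf{v}=0$ to turn $-\int_\Omega u\,\nabla u\cdot\mathbf{v}$ into $-\tfrac12\sum_{K_i}\int_{\partial K_i}u^2\,\mathbf{v}\cdot\mathbf{n}$, and then do the edge-by-edge sign bookkeeping to produce $\tfrac12\int|\mathbf{v}\cdot\mathbf{n}|[u]^2$. That part is correct and matches the paper essentially line for line.

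Where you diverge is the last step, deducing $\widehat u\equiv 0$ from $\|\widehat u\|_{V^{(n)}}=0$. The paper does this with a single global energy identity: for any $t_0\in(T_{n-1},T_n)$ it integrates $(\partial_t\widehat u+\mathbf{v}\cdot\nabla\widehat u)\,\widehat u$ over $\Omega\times(T_{n-1},t_0)$, which vanishes because snapshot functions solve the transport equation \eqref{snapshot2} cell by cell, and after integration by parts the interelement and outer boundary terms cancel thanks to the already-derived conditions $\widehat u(x,T_{n-1}^+)=0$, $|\mathbf{v}\cdot\mathbf{n}|\widehat u=0$ on $\partial\Omega$ and $|\mathbf{v}\cdot\mathbf{n}|\widehat u^-=|\mathbf{v}\cdot\mathbf{n}|\widehat u^+$ on interior edges; this leaves $\tfrac12\int_\Omega\widehat u^2(x,t_0)=0$ for every $t_0$. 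You instead propose a downstream sweep: start in a cell whose spatial inflow lies on $\Gamma^-$ and march cell by cell in the direction of $\mathbf{v}$. This has a genuine gap: the induction needs the upwind relation on coarse cells to be acyclic, i.e.\ a well-founded ordering in which every cell's upwind neighbours are treated first. A general divergence-free field can recirculate (closed streamlines, vortices inside $\Omega$), in which case a cycle of cells can each be upwind of the next, there is no cell all of whose spatial inflow comes from $\Gamma^-$, and your iteration has nowhere to start and never terminates. (The zero initial trace at $T_{n-1}^+$ alone does not rescue this, since characteristics may enter a cell through its spatial inflow boundary at any time, carrying data you have not yet controlled.) The paper's global integration over all of $\Omega$ sidesteps the ordering issue entirely, which is precisely what makes it the more robust argument; if you want to keep a local-propagation flavour you would need to add a hypothesis ruling out recirculation across coarse edges, or replace the sweep by the global energy identity.
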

\begin{proof}
	Since the system (\ref{global_CG}) is a square linear system, 
	it suffices to prove that if $a(\widehat{u},w)=0$ for any $w\in V_H^{(n)}$, then $\widehat{u}=0$.
	To prove this, we will show that $a(u,u)=\left \| u \right \|_{V^{(n)}}^2$ for all $u \in V_{\text{snap}}^{(n)}$.

	By direct calculations, we have 
	\begin{align*}
	a(u,u)=&\int_{T_{n-1}}^{T_n}\int_\Omega \left ( \frac{\partial u}{\partial t}u-u\nabla u\cdot\mathbf{v}  \right )+ 
	\int_{T_{n-1}}^{T_n}\sum_{e\in\mathcal{E}_H^0}\int_e u^+[u]\cdot\mathbf{v}\\
	& +\int_{T_{n-1}}^{T_n}\sum_{e\in{\Gamma^+}}\int_e u^2\mathbf{v}\cdot\mathbf{n}+\int_\Omega u^2(x,T_{n-1}^+)\\
	=&\frac{1}{2}\int_\Omega \left( u^2(x,T_{n}^-)-u^2(x,T_{n-1}^+)\right)- \frac{1}{2}\int_{T_{n-1}}^{T_n}\sum_{K_i}\int_{\partial K_i}u^2\mathbf{v}\cdot\mathbf{n}\\
	&+\int_{T_{n-1}}^{T_n}\sum_{e\in\mathcal{E}_H^0}\int_e u^+[u]\cdot\mathbf{v}+\int_{T_{n-1}}^{T_n}\sum_{e\in{\Gamma^+}}\int_e u^2\mathbf{v}\cdot\mathbf{n}+\int_\Omega u^2(x,T_{n-1}^+)\\
	=&\frac{1}{2}\int_\Omega \left( u^2(x,T_{n}^-)+u^2(x,T_{n-1}^+)\right)- \frac{1}{2}\int_{T_{n-1}}^{T_n}\sum_{K_i}\int_{\partial K_i}u^2\mathbf{v}\cdot\mathbf{n}\\
	&+\int_{T_{n-1}}^{T_n}\sum_{e\in\mathcal{E}_H^0}\int_e u^+[u]\cdot\mathbf{v}+\int_{T_{n-1}}^{T_n}\sum_{e\in{\Gamma^+}}\int_e u^2\mathbf{v}\cdot\mathbf{n}.
	\end{align*}
	Since
	\begin{align*}
	&-\frac{1}{2}\int_{T_{n-1}}^{T_n}\sum_{K_i}\int_{\partial K_i}u^2\mathbf{v}\cdot\mathbf{n}+\int_{T_{n-1}}^{T_n}\sum_{e\in\mathcal{E}_H^0}\int_e u^+[u]\cdot\mathbf{v}+\int_{T_{n-1}}^{T_n}\sum_{e\in{\Gamma^+}}\int_e u^2\mathbf{v}\cdot\mathbf{n}\\
	=&-\frac{1}{2}\int_{T_{n-1}}^{T_n}\sum_{e\in{\Gamma^+}}\int_e\left| \mathbf{v}\cdot\mathbf{n}\right|u^2+\frac{1}{2}\int_{T_{n-1}}^{T_n}\sum_{e\in{\Gamma^-}}\int_e\left| \mathbf{v}\cdot\mathbf{n}\right|u^2+\frac{1}{2}\int_{T_{n-1}}^{T_n}\sum_{e\in\mathcal{E}_H^0}\int_e\left| \mathbf{v}\cdot\mathbf{n}\right|\left({u^-} ^2-{u^+} ^2\right) \\
	&+\int_{T_{n-1}}^{T_n}\sum_{e\in\mathcal{E}_H^0}\int_e \left| \mathbf{v}\cdot\mathbf{n}\right|u^+\left( u^+-u^-\right) +\int_{T_{n-1}}^{T_n}\sum_{e\in{\Gamma^+}}\int_e \left| \mathbf{v}\cdot\mathbf{n}\right|u^2\\
	=&\frac{1}{2}\int_{T_{n-1}}^{T_n}\sum_{e\in{\Gamma^+}}\int_e\left| \mathbf{v}\cdot\mathbf{n}\right|u^2+\frac{1}{2}\int_{T_{n-1}}^{T_n}\sum_{e\in{\Gamma^-}}\int_e\left| \mathbf{v}\cdot\mathbf{n}\right|u^2+\frac{1}{2}\int_{T_{n-1}}^{T_n}\sum_{e\in\mathcal{E}_H^0}\int_e\left| \mathbf{v}\cdot\mathbf{n}\right|\left({u^+}-{u^-}\right)^2\\
	=&\frac{1}{2}\int_{T_{n-1}}^{T_n}\sum_{e\in \mathcal{E}_H}\int_e\left | \mathbf{v}\cdot\mathbf{n} \right |[u]^2,
	\end{align*}
	we obtain $a(u,u)=\left \| u \right \|_{V^{(n)}}^2$. In particular, $a(\widehat{u},\widehat{u})=\left \| \widehat{u} \right \|_{V^{(n)}}^2$.
	By assumption that $a(\widehat{u},w)=0$ for any $w\in V_H^{(n)}$, 
	we have $\left \| \widehat{u} \right \|_{V^{(n)}}^2=0$.
	So, $\widehat{u} (x,T_{n-1}^+)=\widehat{u}(x,T_{n}^-)=0$, $\left | \mathbf{v}\cdot\mathbf{n} \right|\widehat{u}=0$ on $e\in \partial \Omega$, and $\left | \mathbf{v}\cdot\mathbf{n} \right|\widehat{u}^-=\left | \mathbf{v}\cdot\mathbf{n} \right|\widehat{u}^+$ on $e\in \mathcal{E}_H^0$. Then,
	for any $t_0\in(T_{n-1},T_n)$, from equation \eqref{snapshot2}, we have
	$$\int_{T_{n-1}}^{t_0}\int_\Omega(\frac{\partial \widehat{u}}{\partial t}+\mathbf{v}\cdot \nabla \widehat{u})\widehat{u}=\int_{T_{n-1}}^{t_0}\sum_{K_i}\int_{K_i}(\frac{\partial \widehat{u}}{\partial t}+\mathbf{v}\cdot \nabla \widehat{u})\widehat{u}=0.$$
	On the other hand, using integration by parts, we have
	\begin{align*}
	&\int_{T_{n-1}}^{t_0}\int_\Omega(\frac{\partial \widehat{u}}{\partial t}+\mathbf{v}\cdot \nabla \widehat{u})\widehat{u}\\
	=&\frac{1}{2}\int_\Omega \widehat{u}^2(x,t_0)-\frac{1}{2}\int_\Omega \widehat{u}^2(x,T_{n-1}^+)+\frac{1}{2}\int_{T_{n-1}}^{t_0}\sum_{K_i}\int_{\partial K_i}\widehat{u}^2\mathbf{v}\cdot\mathbf{n}\\
	=&\frac{1}{2}\int_\Omega \widehat{u}^2(x,t_0).
	\end{align*}
	Thus $\widehat{u}(x,t_0)=0$ for any $t_0\in(T_{n-1},T_n)$, that is $\widehat{u}=0$. Hence, we proved the theorem.
\end{proof}

In the following, we will prove a best approximation result. In particular, 
we will show that the $V^{(n)}$-norm of the error $u_\text{snap}-u_H$ can be bounded by the $W^{(n)}$-norm of
the difference $u_\text{snap}-w$ for any $w\in V_H^{(n)}$ plus the error from the previous time step. 
\begin{lemma}	\label{lemma1}
	Let $u_\text{snap}$ be the snapshot solution of (\ref{snap_CG}) and let $u_H$ be the multiscale solution of (\ref{global_CG}). Then
	we have the following estimate
	$$\left \| u_\text{snap}-u_H \right \|_{V^{(n)}}^2\leq
	C\inf_{w\in V_H^{(n)}}\left \| u_\text{snap}-w \right \|_{W^{(n)}}^2+\left \| u_\text{snap}-u_H \right \|_{V^{(n-1)}}^2,$$
	where $C$ is a constant independent of the velocity $\bf{v}$ and the mesh size.
\end{lemma}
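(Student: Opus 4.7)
The plan is a Céa-type best-approximation argument, adapted to the fact that the snapshot system (\ref{snap_CG}) and the multiscale system (\ref{global_CG}) share the same bilinear form $a(\cdot,\cdot)$ but are driven by different initial data $f_\text{snap}^{(n)}$ and $f_H^{(n)}$. Their difference equals $(u_\text{snap}-u_H)(x,T_{n-1}^-)$, and this mismatch is precisely what will generate the $\|u_\text{snap}-u_H\|_{V^{(n-1)}}$ term on the right-hand side of the claim.

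First I would fix any $w\in V_H^{(n)}\subset V_\text{snap}^{(n)}$ and, testing (\ref{snap_CG}) and (\ref{global_CG}) against $w-u_H\in V_H^{(n)}$, subtract them to derive the perturbed Galerkin orthogonality
\begin{equation*}
a(u_\text{snap}-u_H,\,w-u_H)=\int_\Omega (u_\text{snap}-u_H)(x,T_{n-1}^-)\,(w-u_H)(x,T_{n-1}^+).
\end{equation*}
Using the coercivity identity $a(z,z)=\|z\|_{V^{(n)}}^2$ from Theorem \ref{th1} on $z=u_\text{snap}-u_H\in V_\text{snap}^{(n)}$ and writing $z=(u_\text{snap}-w)+(w-u_H)$ gives
\begin{equation*}
\|u_\text{snap}-u_H\|_{V^{(n)}}^2=a(u_\text{snap}-u_H,\,u_\text{snap}-w)+\int_\Omega (u_\text{snap}-u_H)(T_{n-1}^-)(w-u_H)(T_{n-1}^+).
\end{equation*}

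The main technical step, and the step I expect to be the principal obstacle, is a continuity estimate of the form
\begin{equation*}
|a(z,\eta)|\;\leq\;C\,\|z\|_{V^{(n)}}\|\eta\|_{W^{(n)}},\qquad z,\eta\in V_\text{snap}^{(n)}.
\end{equation*}
To prove it I would exploit that every snapshot function satisfies $\partial_t\psi+\mathbf{v}\cdot\nabla\psi=0$ inside each coarse cell, so that space--time integration by parts on each $K_i\times(T_{n-1},T_n)$ converts the volume part of $a(z,\eta)$ into boundary fluxes through $\partial K_i$, traces at the two time slices, and upwind edge terms. Each piece is then controlled by Cauchy--Schwarz using that $\|\cdot\|_{W^{(n)}}$ bounds $\eta$ on every $\partial K_i$ with one-sided traces separately and at $T_{n-1}^+,T_n^-$, while $\|\cdot\|_{V^{(n)}}$ controls the matching contributions of $z$ through the jumps $[z]$ and the two endpoint slices; the inequality $[u]^2\leq 2((u^+)^2+(u^-)^2)$ is what makes $W^{(n)}$ stronger than $V^{(n)}$ on interior edges, as required by the asymmetric roles. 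The hard part here is to organize the integration by parts so that the weaker $V^{(n)}$-norm ends up on $z$ and the stronger $W^{(n)}$-norm on $\eta$, without producing terms that either norm fails to dominate.

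Finally I would bound the two terms of the identity. The first is $\leq C\|u_\text{snap}-u_H\|_{V^{(n)}}\|u_\text{snap}-w\|_{W^{(n)}}$ by the continuity estimate. For the second, Cauchy--Schwarz in $\Omega$ together with $\|(u_\text{snap}-u_H)(T_{n-1}^-)\|_{L^2}\leq\sqrt{2}\|u_\text{snap}-u_H\|_{V^{(n-1)}}$ and the triangle inequality $\|(w-u_H)(T_{n-1}^+)\|_{L^2}\leq\sqrt{2}\|w-u_H\|_{V^{(n)}}\leq C(\|u_\text{snap}-w\|_{W^{(n)}}+\|u_\text{snap}-u_H\|_{V^{(n)}})$ gives an estimate in which $\|u_\text{snap}-u_H\|_{V^{(n)}}$ appears linearly on the right. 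Two applications of Young's inequality then absorb both occurrences of $\|u_\text{snap}-u_H\|_{V^{(n)}}$ into the left-hand side, and taking the infimum over $w\in V_H^{(n)}$ yields the claimed bound.
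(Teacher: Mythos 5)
Your proposal is correct and follows essentially the same route as the paper: the perturbed Galerkin orthogonality with the $(u_\text{snap}-u_H)(x,T_{n-1}^-)$ mismatch term, the continuity bound $|a(z,\eta)|\leq C\|z\|_{V^{(n)}}\|\eta\|_{W^{(n)}}$ obtained by exploiting that snapshot functions solve the transport equation locally so the volume term reduces to boundary fluxes, and the coercivity identity from Theorem \ref{th1}. The only (cosmetic) difference is that you absorb both occurrences of $\|u_\text{snap}-u_H\|_{V^{(n)}}$ directly by Young's inequality, whereas the paper detours through an intermediate best-approximation inequality; your variant is if anything slightly cleaner.
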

\begin{proof}
	We will first show the boundedness condition 
	$a(u,w)\leq \sqrt{2} \left\| u\right\| _{V^{(n)}}\left\| w\right\| _{W^{(n)}}$ for any $u,w \in V_{\text{snap}}^{(n)}$.
	Notice that, using integration by parts and (\ref{snapshot2}), we have 	
	\begin{align*}
	&\int_{T_{n-1}}^{T_n}\int_\Omega \left ( \frac{\partial u}{\partial t}w-u\nabla w\cdot\mathbf{v}  \right )\\
	=&\int_{T_{n-1}}^{T_n}\int_\Omega \left ( \frac{\partial u}{\partial t}w+w\nabla u\cdot\mathbf{v}  \right )-\int_{T_{n-1}}^{T_{n}} \sum_{K_i}\int_{\partial K_i}\mathbf{v}\cdot\mathbf{n}uw\\
	=&\int_{T_{n-1}}^{T_n}\sum_{K_i}\int_{K_i} \left ( \frac{\partial u}{\partial t}+\nabla u\cdot\mathbf{v}  \right )w-\int_{T_{n-1}}^{T_{n}} \sum_{K_i}\int_{\partial K_i}\mathbf{v}\cdot\mathbf{n}uw\\
	=&-\int_{T_{n-1}}^{T_{n}} \sum_{K_i}\int_{\partial K_i}\mathbf{v}\cdot\mathbf{n}uw.
	\end{align*}
	Therefore, we have 
	\begin{align*}
	a(u,w)=&-\int_{T_{n-1}}^{T_{n}} \sum_{K_i}\int_{\partial K_i}\mathbf{v}\cdot\mathbf{n}uw+\int_{T_{n-1}}^{T_n}\sum_{e\in\mathcal{E}_H^0}\int_e u^+[w]\cdot\mathbf{v}+\int_{T_{n-1}}^{T_n}\sum_{e\in{\Gamma^+}}\int_e uw\mathbf{v}\cdot\mathbf{n}\\
	&+\int_\Omega u(x,T_{n-1}^+)w(x,T_{n-1}^+)\\
	=&-\int_{T_{n-1}}^{T_n}\sum_{e\in\mathcal{E}_H^0}\int_e w^-[u]\cdot\mathbf{v}-\int_{T_{n-1}}^{T_n}\sum_{e\in{\Gamma^-}}\int_e uw\mathbf{v}\cdot\mathbf{n}+\int_\Omega u(x,T_{n-1}^+)w(x,T_{n-1}^+)\\
	\leq&\left (\int_\Omega u^2(x,T_{n-1}^+)+\int_{T_{n-1}}^{T_n}\sum_{e\in\mathcal{E}_H^0}\int_e \left | \mathbf{v}\cdot\mathbf{n} \right |[u]^2+\int_{T_{n-1}}^{T_n}\sum_{e\in{\Gamma^-}}\int_e \left | \mathbf{v}\cdot\mathbf{n} \right |u^{2}  \right )^{1/2}\\
	&\left (\int_\Omega w^2(x,T_{n-1}^+)+\int_{T_{n-1}}^{T_n}\sum_{e\in\mathcal{E}_H^0}\int_e \left | \mathbf{v}\cdot\mathbf{n} \right |{w^{-}}^2+\int_{T_{n-1}}^{T_n}\sum_{e\in{\Gamma^-}}\int_e \left | \mathbf{v}\cdot\mathbf{n} \right |w^{2}  \right )^{1/2}\\
	\leq&\sqrt{2}\left \| u \right \|_{V^{(n)}}\left (\int_\Omega w^2(x,T_{n-1}^+)+\int_{T_{n-1}}^{T_n}\sum_{e\in\mathcal{E}_H^0}\int_e \left | \mathbf{v}\cdot\mathbf{n} \right |{w^{-}}^2+\int_{T_{n-1}}^{T_n}\sum_{e\in{\Gamma^-}}\int_e \left | \mathbf{v}\cdot\mathbf{n} \right |w^{2}  \right )^{1/2}.
	\end{align*}
	We will next estimate the right hand side of the above inequality. 
	From equation (\ref{snapshot2}), we have
	\begin{align*}
	0=&\int_{T_{n-1}}^{T_n}\int_\Omega\left ( \frac{\partial w}{\partial t}+\mathbf{v}\cdot \nabla w \right )w\\
	=&\frac{1}{2}\left ( \int_\Omega w^2(x,T_{n}^-)-\int_\Omega w^2(x,T_{n-1}^+)+\int_{T_{n-1}}^{T_{n}} \sum_{K_i}\int_{\partial K_i}\mathbf{v}\cdot\mathbf{n}w^2 \right )\\
	=&\frac{1}{2}\left ( 
	\int_\Omega w^2(x,T_{n}^-)-\int_\Omega w^2(x,T_{n-1}^+)-\int_{T_{n-1}}^{T_{n}} \sum_{e\in \Gamma^-}\int_{e}\left |\mathbf{v}\cdot\mathbf{n}  \right |w^2 \right.\\
	&\left.+ 
	\int_{T_{n-1}}^{T_{n}} \sum_{e\in \Gamma^+}\int_{e}\left |\mathbf{v}\cdot\mathbf{n}  \right |w^2+\int_{T_{n-1}}^{T_{n}} \sum_{e\in \mathcal{E}_H^0}\int_{e}\left |\mathbf{v}\cdot\mathbf{n}  \right |\left ({w^{+}}^2-{w^{-}}^2  \right ) \right ).
	\end{align*}
	Thus, we obtain 
	\begin{align*}
	&\int_\Omega w^2(x,T_{n-1}^+)+\int_{T_{n-1}}^{T_n}\sum_{e\in\mathcal{E}_H^0}\int_e \left | \mathbf{v}\cdot\mathbf{n} \right |{w^{-}}^2+\int_{T_{n-1}}^{T_n}\sum_{e\in{\Gamma^-}}\int_e \left | \mathbf{v}\cdot\mathbf{n} \right |w^{2}  \\
	=&\frac{1}{2}\left ( 
	\int_\Omega w^2(x,T_{n}^-)+\int_\Omega w^2(x,T_{n-1}^+)+\int_{T_{n-1}}^{T_{n}} \sum_{e\in \Gamma^-}\int_{e}\left |\mathbf{v}\cdot\mathbf{n}  \right |w^2 \right.\\
	&\left.+ 
	\int_{T_{n-1}}^{T_{n}} \sum_{e\in \Gamma^+}\int_{e}\left |\mathbf{v}\cdot\mathbf{n}  \right |w^2+\int_{T_{n-1}}^{T_{n}} \sum_{e\in \mathcal{E}_H^0}\int_{e}\left |\mathbf{v}\cdot\mathbf{n}  \right |\left (w^{+2}+w^{-2}  \right ) \right )\\
	=&\frac{1}{2}\left ( \int_\Omega w^2(x,T_{n}^-)+\int_\Omega w^2(x,T_{n-1}^+)+\int_{T_{n-1}}^{T_{n}} \sum_{K_i}\int_{\partial K_i}\left| \mathbf{v}\cdot\mathbf{n}\right| w^2 \right )\\
	=&\left\| w\right\| _{W^{(n)}}^2.
	\end{align*}
	So, we have proved the desired inequality. 
	
	Next, using the coercivity and the boundedness of the bilinear form $a(v,w)$, we obtain the following best approximation result
	\begin{equation}\label{best approximation}
	\left\| u_\text{snap}-u_H\right\| _{V^{(n)}}\leq \sqrt{2} \left\| u_\text{snap}-w\right\| _{W^{(n)}}\;\;\;\forall w\in V_H^{(n)}.
	\end{equation}
	Combining \eqref{global_CG} and a similar formulation for the fine-scale solution $u_\text{snap}$, for any $v\in V_H^{(n)}$, we have
	\begin{align}
	a(u_\text{snap}-u_H,v)=&\int_\Omega \left( f_\text{snap}^{(n)}(x)-f_H^{(n)}(x)\right) v(x,T_{n-1}^+)\\
	=&\, \int_\Omega \left( u_\text{snap}(x,T_{n-1}^-)-u_H(x,T_{n-1}^-)\right) v(x,T_{n-1}^+)\\
	\leq&\,2 \left\| u_\text{snap}-u_H\right\| _{V^{(n-1)}}\left\| v\right\| _{V^{(n)}}. \label{eq:ineq1}
	\end{align}
	Therefore for any $w\in V_H^{(n)}$, setting $v=w-u_H\in V_H^{(n)}$ , and using the coercivity, boundedness and the above best approximation result, we obtain
	\begin{align*}
	& \left \| u_\text{snap}-u_H \right \|_{V^{(n)}}^2\\
	=&\, a(u_\text{snap}-u_H,u_\text{snap}-u_H)\\
	=&\, a(u_\text{snap}-u_H,u_\text{snap}-w)+a(u_\text{snap}-u_H,w-u_H).
	\end{align*}
	Using (\ref{eq:ineq1}), we have 
	\begin{align*}
	& \left \| u_\text{snap}-u_H \right \|_{V^{(n)}}^2\\
	\leq&\, \sqrt{2} \left \| u_\text{snap}-u_H \right \|_{V^{(n)}}\left \| u_\text{snap}-w \right \|_{W^{(n)}}+2\left \| u_\text{snap}-u_H \right \|_{V^{(n-1)}}\left \| w-u_H \right \|_{V^{(n)}}\\
	\leq&\, 2\left \| u_\text{snap}-w \right \|_{W^{(n)}}^2+\left \| u_\text{snap}-u_H \right \|_{V^{(n-1)}}^2  \\
	&\, +\frac{1}{2}\left \| w-u_\text{snap}+u_\text{snap}-u_H \right \|_{V^{(n)}}^2\\
	\leq&\, 2\left \| u_\text{snap}-w \right \|_{W^{(n)}}^2+\left \| u_\text{snap}-u_H \right \|_{V^{(n-1)}}^2 \\
	&\, +\frac{1}{2}\left( \left ( 1+\sqrt{2} \right )\left \| u_\text{snap}-w \right \|_{V^{(n)}}^2+\left ( 1+\frac{1}{\sqrt{2}} \right )\left \| u_\text{snap}-u_H \right \|_{V^{(n)}}^2\right) \\
	\leq&\, 2\left \| u_\text{snap}-w \right \|_{W^{(n)}}^2+\left \| u_\text{snap}-u_H \right \|_{V^{(n-1)}}^2 \\
	&\, +\frac{1}{2}\left( \left ( 1+\sqrt{2} \right )\left \| u_\text{snap}-w \right \|_{V^{(n)}}^2+\left ( 1+\frac{1}{\sqrt{2}} \right )2\left \| u_\text{snap}-w \right \|_{W^{(n)}}^2\right) \\
	=&\, \frac{9+2\sqrt{2}}{2}\left \| u_\text{snap}-w \right \|_{W^{(n)}}^2+\left \| u_\text{snap}-u_H \right \|_{V^{(n-1)}}^2.
	\end{align*}
	Hence, we proved the lemma.
\end{proof}

Now, we are ready to prove our main convergence result in this section. First, we define some notations.
For any fine-scale function $u_\text{snap} \in V_\text{snap}$, we can write $u_\text{snap}=\sum_{i}u_{\text{snap},i}$ where $u_{\text{snap},i}\in V_\text{snap}^{i(n)}$
and the sum is taken over all spatial coarse elements $K_i$. 
We remark that this representation holds for each coarse time interval. 
Since the snapshot functions are the restriction of solutions of the transport equation on oversampled regions, we can write
$u_{\text{snap},i}=u_{\text{snap},i}^+|_{K_i\times (T_{n-1},T_{n})}$ where $u_{\text{snap},i}^+\in V_\text{snap}^{i(n)+}$.
The following is our main spectral convergence theorem.
\begin{theorem}
	Let $u_\text{snap}$ be the fine-scale solution of (\ref{snap_CG}) and let $u_H$ be the multiscale solution of (\ref{global_CG}). 
	Then we have
	$$ \left \| u_\text{snap}-u_H \right \|_{V^{(n)}}^2\leq \frac{C}{\Lambda_*}\sum_{i}a_n\left( u_{\text{snap},i}^+,u_{\text{snap},i}^+\right)+\left \| u_\text{snap}-u_H \right \|_{V^{(n-1)}}^2.$$
	where $\Lambda_*=\min_i \lambda_{L_i+1}^{(i)}$.
\end{theorem}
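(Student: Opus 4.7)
The plan is to deduce the theorem from Lemma \ref{lemma1} by constructing an explicit trial function $w \in V_H^{(n)}$ that realizes the spectral bound. By Lemma \ref{lemma1},
$$\|u_{\text{snap}}-u_H\|_{V^{(n)}}^2 \le C \inf_{w \in V_H^{(n)}} \|u_{\text{snap}}-w\|_{W^{(n)}}^2 + \|u_{\text{snap}}-u_H\|_{V^{(n-1)}}^2,$$
so it suffices to exhibit one admissible $w$ whose $W^{(n)}$-error is bounded by $\Lambda_*^{-1} \sum_i a_n(u_{\text{snap},i}^+, u_{\text{snap},i}^+)$ up to a constant.

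The construction is a local truncation of the eigenfunction expansion. For each coarse element $K_i$, let $\{\phi_j^{+,(i)}\}$ denote the eigenfunctions of the local spectral problem on $K_i^+ \times (T_{n-1}^*,T_n)$, normalized so that $s_n(\phi_j^{+,(i)},\phi_k^{+,(i)}) = \delta_{jk}$ (so that $a_n(\phi_j^{+,(i)},\phi_k^{+,(i)}) = \lambda_j^{(i)} \delta_{jk}$). Expand $u_{\text{snap},i}^+ = \sum_j d_j^{(i)} \phi_j^{+,(i)}$, define the oversampled truncation $w_i^+ = \sum_{j \le L_i} d_j^{(i)} \phi_j^{+,(i)}$, and take $w_i = w_i^+|_{K_i \times (T_{n-1},T_n)} \in V_H^{i(n)}$, which is admissible by construction of the offline space. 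Finally set $w = \sum_i w_i \in V_H^{(n)}$.

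The spectral step is then a routine Rayleigh quotient estimate: the two-sided orthogonality of the eigenfunctions gives
$$s_n\bigl(u_{\text{snap},i}^+-w_i^+,\,u_{\text{snap},i}^+-w_i^+\bigr) = \sum_{j > L_i} (d_j^{(i)})^2 \le \frac{1}{\lambda_{L_i+1}^{(i)}} \sum_{j > L_i} \lambda_j^{(i)} (d_j^{(i)})^2 \le \frac{1}{\Lambda_*}\, a_n\bigl(u_{\text{snap},i}^+,u_{\text{snap},i}^+\bigr),$$
where the last step drops the indices $j \le L_i$ (whose contribution to $a_n$ is nonnegative) and invokes $\Lambda_* \le \lambda_{L_i+1}^{(i)}$.

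The main obstacle is the bookkeeping that connects the \emph{global} $W^{(n)}$-norm of $u_{\text{snap}}-w$ to the \emph{sum of local} $s_n$-contributions. Since each $u_{\text{snap},i}-w_i$ is supported on $K_i \times (T_{n-1},T_n)$ and since in the definition of $\|\cdot\|_{W^{(n)}}^2$ the edge term $\sum_{K_j} \int_{\partial K_j} |\mathbf{v}\cdot\mathbf{n}| (\cdot)^2$ picks up the trace from the $K_j$-side only, a careful inspection of the two definitions yields the clean identity
$$\|u_{\text{snap}}-w\|_{W^{(n)}}^2 = \sum_i s_n\bigl(u_{\text{snap},i}-w_i,\, u_{\text{snap},i}-w_i\bigr),$$
and because $s_n$ depends only on values on $K_i$ (at $t = T_{n-1}^+,\,T_n^-$ and on $\partial K_i$) one may replace $(u_{\text{snap},i},w_i)$ by their oversampled counterparts $(u_{\text{snap},i}^+,w_i^+)$ without changing the value. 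Combining this identity with the spectral estimate above, summing over $i$, and substituting into Lemma \ref{lemma1} completes the proof.
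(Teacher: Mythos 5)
Your proposal is correct and follows essentially the same route as the paper: the paper's projection $P(u_\text{snap})=\sum_i\sum_{l\le L_i}c_{l,i}\phi_l^i$ is exactly your truncated local eigenfunction expansion, and the paper likewise passes from the global $W^{(n)}$-norm to the sum of local $s_n$-forms, applies the Rayleigh-quotient bound $s_n(\cdot,\cdot)\le\lambda_{L_i+1}^{(i)-1}a_n(\cdot,\cdot)$ on the tail, and concludes via Lemma \ref{lemma1}. Your write-up merely makes the orthonormalization and the drop of the $j\le L_i$ terms explicit, which the paper leaves implicit.
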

\begin{proof}
	Note that $u_\text{snap}=\sum_{i}u_{\text{snap},i}=\sum_{i}\sum_{l}c_{l,i}\phi_l^i$, where $\phi_l^i$ is the $l$-th multiscale basis function for the coarse element $K_i$.
	Using this expression, we can define a projection of $u_\text{snap}$ into $V_H^{(n)}$ by
	$$P(u_\text{snap})=\sum_{i}\sum_{l\leq L_i}c_{l,i}\phi_l^i.$$
	Then we have 
	\begin{align*}
	\inf_{w\in V_H^{(n)}}\left \| u_\text{snap}-w \right \|_{W^{(n)}}^2\leq&\left \| u_\text{snap}-P(u_\text{snap}) \right \|_{W^{(n)}}^2\\
	=&\sum_{i}s_n\left( u_{\text{snap},i}-P(u_{\text{snap},i}),u_{\text{snap},i}-P(u_{\text{snap},i})\right) \\
	=&\sum_{i}s_n\left( u_{\text{snap},i}^+-P(u_{\text{snap},i})^+,u_{\text{snap},i}^+-P(u_{\text{snap},i})^+\right) \\
	\leq&\sum_{i}\frac{1}{\lambda_{L_i+1}^{(i)}}a_n\left( u_{\text{snap},i}^+,u_{\text{snap},i}^+\right)\\
	\leq&\frac{1}{\Lambda_*}\sum_{i}a_n\left( u_{\text{snap},i}^+,u_{\text{snap},i}^+\right)
	.
	\end{align*}
	Combining with Lemma \ref{lemma1}, we proved the theorem.
\end{proof}

	Let $u$ be the exact solution to problem \eqref{main}. We also note that $u_\text{snap}\approx u$ when $h$ is small enough.\\
	Similar to the proof of \eqref{best approximation}, we can prove
	$$\left\| u-u_\text{snap}\right\| _{V^{(n)}}\leq \sqrt{2} \left\| u-w\right\| _{W^{(n)}}\;\;\;\forall w\in V_\text{snap}^{(n)}.$$
	In particular, we choose $w=\widetilde{u}\in V_\text{snap}^{(n)}$ such that  $\widetilde{u}=P(g)\text{ on }\Gamma^-\times(T_{n-1}-T_n)$ and $\widetilde{u}(x,T_{n-1})=P(u(x,T_{n-1})),$ where $P$ is some piecewise linear interpolation.
	Hence $u-\widetilde{u}$ is the solution to the following equation
	\begin{equation*}
	\begin{split}
	\frac{\partial (u-\widetilde{u})}{\partial t}+\mathbf{v}\cdot \nabla (u-\widetilde{u}) & =  0\quad \quad\quad \text{in}\;\Omega\times(0,T),
	\\
	(u-\widetilde{u}) & =  (I-P)(g) \quad \quad\quad\text{on}\;\Gamma^-\times(0,T),\\ 
	(u-\widetilde{u})(x,0)& =  (I-P)(u_0(x))\,\quad\text{in}\;\Omega\times \left\lbrace t=0 \right\rbrace ,
	\end{split}
	\end{equation*}
	Since $(I-P)(g)$ and $(I-P)(u_0(x))$ converge to $0$ when $h$ converges to $0$, we can regard $u\approx\widetilde{u}$ when $h$ is small enough. Hence $u_\text{snap}\approx u$ when $h$ is small enough.
\section{\textbf{Numerical Results}}
\label{sec:numerical}

In this section, we present several
numerical examples for the case in Section \ref{sec:CG} to show the performance of the proposed method. The situation in Section \ref{sec:DG} will be similar.
We solve the system \eqref{main} using the space-time 
GMsFEM.
The space domain $\Omega$ is taken as the unit square $[0,1]\times[0,1]$ and is divided into $10\times10$ coarse blocks consisting of uniform squares. Each coarse block is then divided into $10\times10$ fine blocks consisting of uniform squares. That is, $\Omega$ is partitioned by $100\times100$ square fine blocks. The whole time interval is $(0,0.08)$ (i.e., $T=0.08$) and is divided into $80$ uniform coarse time 
intervals and each coarse time interval is then divided into $5$ fine time intervals. And we define an oversampling region $K_i^+\times (T_{n-1}^*,T_{n})$ by enlarging $K_i\times (T_{n-1},T_{n})$ by one coarse grid layer. 

\subsection{Example 1} 
In our first example, we consider CG in coarse cell case, 
take $u_0=\sin(2x+2y)$ and $g=\sin(2x+2y-4t)$. To generate a heterogeneous divergence-free velocity field $\mathbf{v}=(v_1,v_2)$, we solve the following high contrast flow equation using a fine-scale mixed method:
\begin{align*}
\left\{\begin{matrix}
\kappa^{-1}\mathbf{v}+\nabla p &=0 &\quad \text{in}\; \Omega, \\ 
\nabla \cdot\bf{v}&=0 & \quad \text{in}\; \Omega,\\ 
\bf{v}\cdot{n}&=f &\quad \text{on}\; \partial\Omega,
\end{matrix}\right.
\end{align*}
where 
\begin{align*}
f=
\left\{\begin{matrix}
-1 & \text{on}\; \{0\}\times (0,1),\\
1 & \text{on}\; \{1\}\times (0,1),\\
0 & \text{otherwise},
\end{matrix}\right.
\end{align*}
and $\kappa$ is a heterogeneous media. 
The heterogeneous field $\kappa$ and and the corresponding velocity $\bf{v}$ are shown in Figure \ref{prameter}.

\begin{figure}[ht]
	\begin{minipage}[t]{0.33\textwidth}
		\centering
		\includegraphics[width=2.05in]{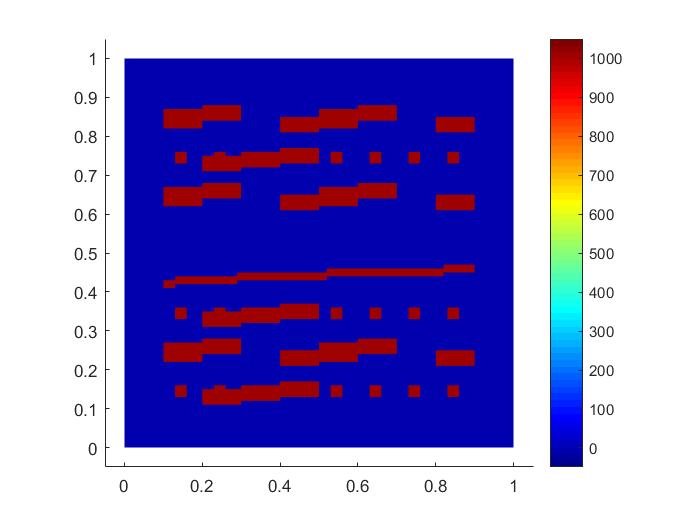}
		\title{$\kappa$}
	\end{minipage}
	\begin{minipage}[t]{0.33\textwidth}
		\centering
		\includegraphics[width=2.05in]{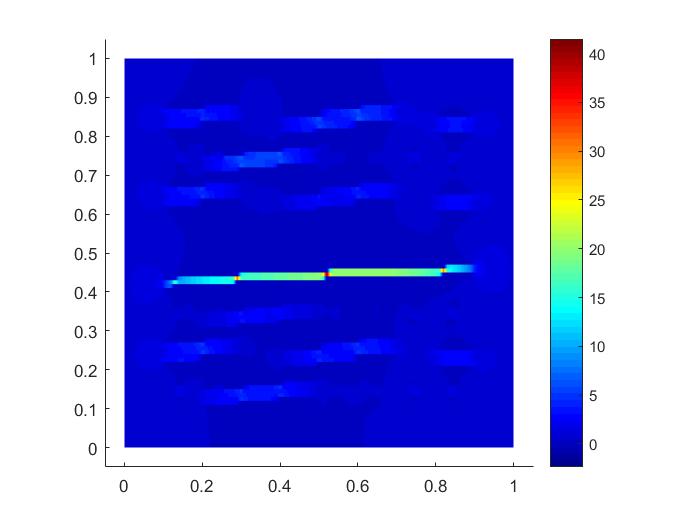}
		\title{$x$-component of velocity $\bf{v}$}
	\end{minipage}%
	\begin{minipage}[t]{0.33\textwidth}
		\centering
		\includegraphics[width=2.05in]{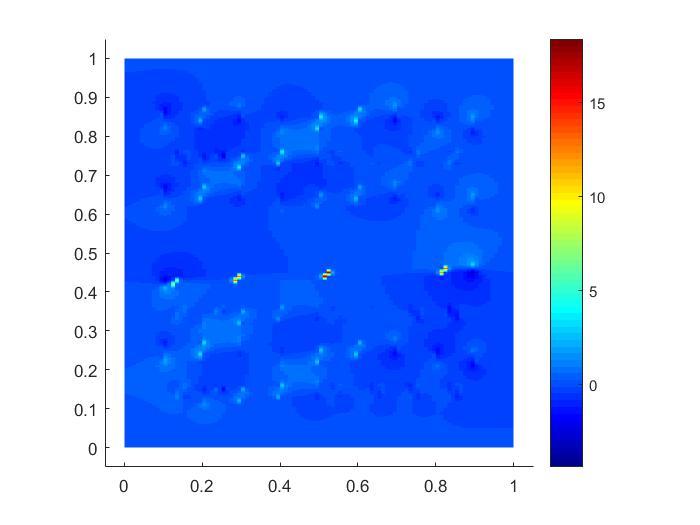}
		\title{$y$-component of velocity $\bf{v}$}
	\end{minipage}
	\caption{A heterogeneous field $\kappa$ and the corresponding velocity $\bf{v}$.}
	\label{prameter}
\end{figure}

To compare the accuracy, we will use the following error quantities:
$$e_1=\left ( \frac{\int_0^T\int_\Omega\left | u_H-u_h \right |^2}{\int_0^T\int_\Omega\left | u_h \right |^2} \right )^{1/2},\quad\quad e_2=\left ( \frac{\int_\Omega\left | u_H(\cdot,T)-u_h(\cdot,T) \right |^2}{\int_\Omega\left | u_h(\cdot,T) \right |^2} \right )^{1/2}.$$

Furthermore, we introduce the concept of \textit{snapshot ratio}:
$$\text{snapshot ratio}=\frac{\text{dim}(V_H^{(n)})}{\text{dim}(V_\text{snap}^{(n)})},$$
where $\text{dim}(V_H^{(n)})$ refers to the dimension of offline space, and $\text{dim}(V_\text{snap}^{(n)})$ refers to the number of functions $\delta_{ij}(x,t)$ from equation \eqref{snapshot2}.

\begin{figure}[ht] 
  \centering
  \includegraphics[width=2.5in]{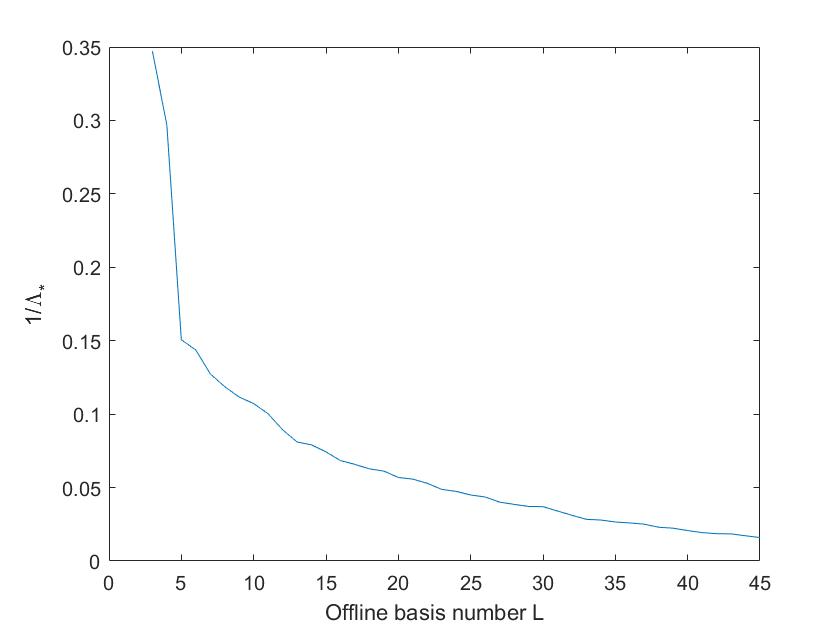} 
  \caption{The values $1/\Lambda_*$ against number of basis functions.}
  \label{inverse lambda}
\end{figure}

\begin{table}[ht]
	\centering  %
	\begin{tabular}{|c|c|c|c|c|}
		\hline
		\hline
		$L$ & dim$(V_H^{(n)})$ & snapshot ratio & $e_1$ & $e_2$  \\
		\hline
		1 &100 & 0.45\%  &  45.85\% & 48.38\% \\
		\hline
		3 &300 & 1.34\%  &  7.29\% & 10.08\%  \\
		\hline
		5 &500 & 2.24\%  &  6.01\% & 8.41\%  \\
		\hline
		7 &700 & 3.13\%  &  4.22\% & 5.73\%  \\
		\hline
		10 &1000 & 4.47\%  &  3.48\% & 4.99\%\\
		\hline
		15 &1500 & 6.71\%  &  2.83\% & 4.24\% \\
		\hline
		20 &2000 & 8.94\% &  2.46\% & 3.64\%  \\
		\hline
		25 &2500 & 11.18\% &  2.07\% & 3.16\%  \\
		\hline
		30 &3000 & 13.41\%  &  1.85\% & 2.82\% \\
		\hline
	\end{tabular}
	\caption{Errors for Example 1 (dim$(V_h^{(n)})$=72600 and dim$(V_\text{snap}^{(n)})$=22365 for each time step $n$).}
	\label{error table }
\end{table}

\begin{figure}[ht]
	\begin{minipage}[t]{0.45\textwidth}
		\centering
		\includegraphics[width=3in]{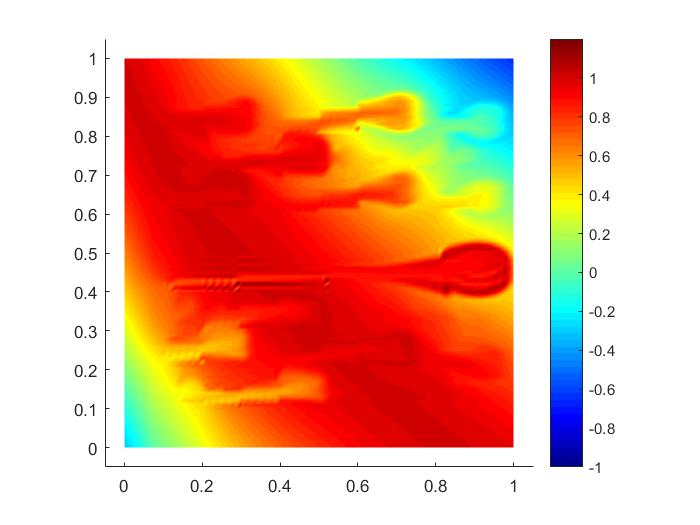}
		\title{$u_h$}
	\end{minipage}%
	\begin{minipage}[t]{0.45\textwidth}
		\centering
		\includegraphics[width=3in]{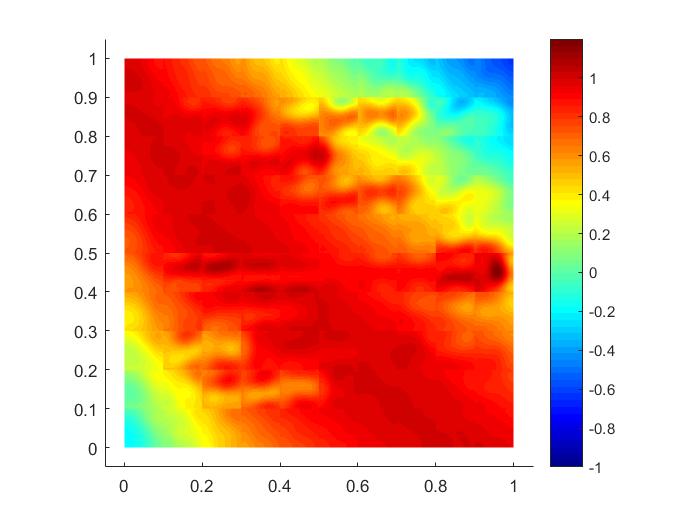}
		\title{$u_H \text{ with } L=10$}
	\end{minipage}\\%
	\begin{minipage}[t]{0.45\textwidth}
		\centering
		\includegraphics[width=3in]{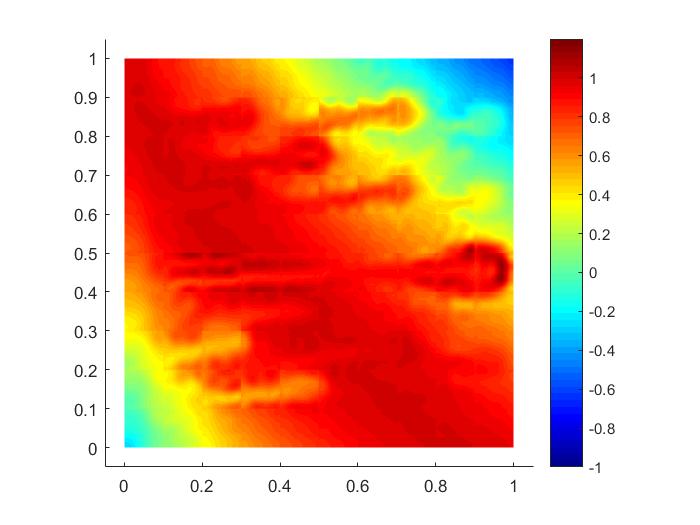}
		\title{$u_H \text{ with } L=20$}
	\end{minipage}%
	\begin{minipage}[t]{0.45\textwidth}
		\centering
		\includegraphics[width=3in]{UmsL20.jpg}
		\title{$u_H \text{ with } L=30$}
	\end{minipage}%
	\caption{$u_h$ and $u_H$ in Example 1.}
	\label{solution}
\end{figure}

In Figure \ref{inverse lambda}, we plot the values $1/\Lambda_*$, where $\Lambda_*=\min_{K_i}\lambda_{L_i+1}^{(i)}$, against the number of basis functions.
We clearly see the decay of the eigenvalues. We also observe that the 
decay is much faster for the first few eigenfunctions, which
implies that a few basis will give a substantial decay in error. 
In Table \ref{error table }, we show the errors using different 
numbers of offline basis functions $L_i$.
We see clearly the reduction of error when more basis functions are used,
and the reduction of error is more rapid when fewer basis functions are used.
We also observe that
the method gives reasonable error levels with small snapshot ratios. 
On the other hand, 
Figures \ref{solution} shows the fine and multiscale solutions at $t=0.08$. From these figures, we observe very good agreements between the
fine-scale and multiscale solutions.

In addition, we compare the performance of our method with the use of 
space-time polynomial basis. For space-time polynomial basis, we build local offline space $V_H^{i(n)}$ using $Q_s$ functions in $K_i\times (T_{n-1},T_n)$ (total $(s+1)^3$ functions), where $s=1,\,2,\,\cdots$ and $Q_s$ is the space of polynomials of degree $s$ in each direction.
We denote this solution using space-time polynomial basis 
by $u_\text{poly}$. Then, we compare these numerical results to  
GMsFEM method with $L_i=(s+1)^3$. In Table \ref{error compare polynomial}, 
we present the errors with the use of $s=1$ and $s=2$ 
for space-time polynomial basis and the
use of $L=8$ and $L=27$ multiscale basis. 
We note that the dimension of $V_H$ is the same for both cases. 
From this table, we see that the multiscale basis performs better than polynomial basis when the same number of basis is used. 
Figures \ref{compare polynomial} shows the corresponding solutions, and we observe that the GMsFEM provides better approximate solutions.

From the results in Tables \ref{error table } and \ref{error compare polynomial}, we observe our multiscale
approach provides an efficient representation of the solution. 
In particular, if one uses space-time piecewise linear approximation, the errors $e_1$ and $e_2$ are $6.79\%$ and $9.43\%$ respectively
and the dimension of the approximation space for each space-time cell is $8$.
On the other hand, the multiscale approach is able to obtain similar error levels by using $3$ multiscale basis functions per space-time cell. 
 Moreover,
 if one uses space-time piecewise quadratic approximation, the errors $e_1$ and $e_2$ are $4.12\%$ and $5.36\%$ respectively
and the dimension of the approximation space for each space-time cell is $27$.
On the other hand, the multiscale approach is able to obtain similar error levels by using $7$ multiscale basis functions per space-time cell. 

\begin{table}[ht]
	\centering 
	\begin{tabular}{|c|c|c|}
		\hline
		\hline
		& $e_1$ & $e_2$  \\
		\hline
		Multiscale basis with $L=8$  &  4.11\% & 5.69\% \\
		\hline
		Polynomial basis with $Q_s=Q_1$  &  6.79\% & 9.43\% \\
		\hline
		\hline
		Multiscale basis with $L=27$  &  1.95\% & 2.96\% \\
		\hline
		Polynomial basis with $Q_s=Q_2$  &  4.12\% & 5.36\%  \\
		\hline
	\end{tabular}
	\caption{Comparing the use of multiscale and polynomial basis functions for Example 1.}
	\label{error compare polynomial}
\end{table}

\begin{figure}[ht]
	\begin{minipage}[t]{0.45\textwidth}
		\centering
		\includegraphics[width=3in]{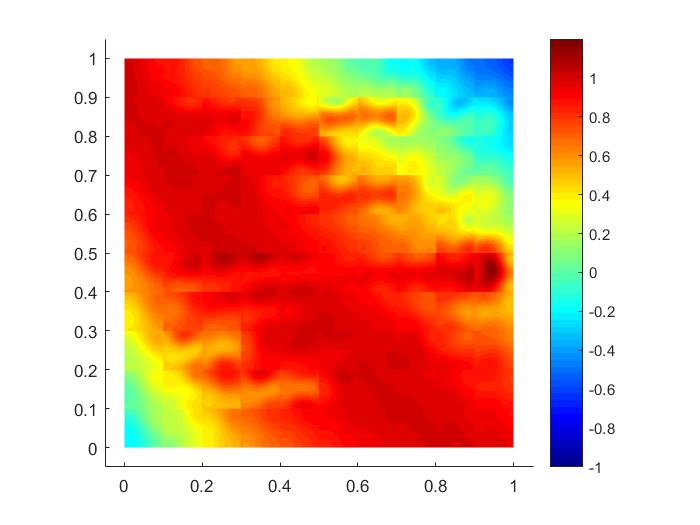}
		\title{$u_H \text{ with } L=8$}
	\end{minipage}%
	\begin{minipage}[t]{0.45\textwidth}
		\centering
		\includegraphics[width=3in]{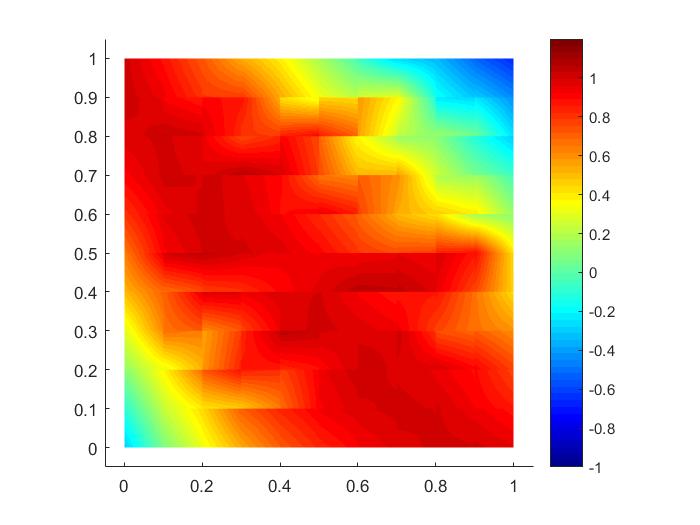}
		\title{$u_\text{poly} \text{ with } Q_s=Q_1$}
	\end{minipage}\\%
	\begin{minipage}[t]{0.45\textwidth}
		\centering
		\includegraphics[width=3in]{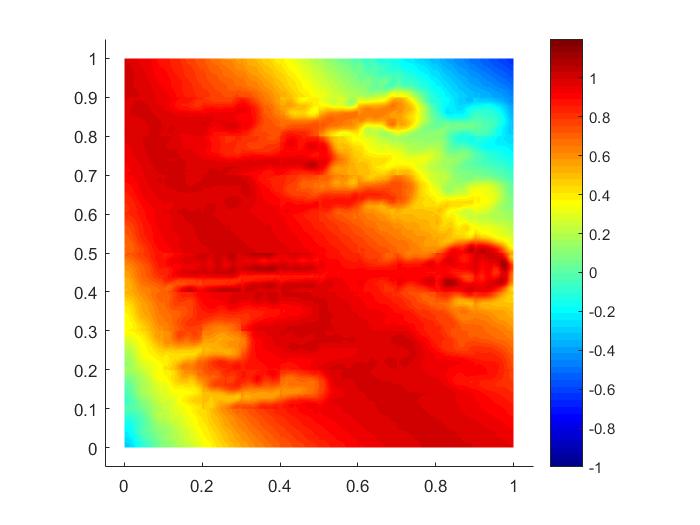}
		\title{$u_H \text{ with } L=27$}
	\end{minipage}%
	\begin{minipage}[t]{0.45\textwidth}
		\centering
		\includegraphics[width=3in]{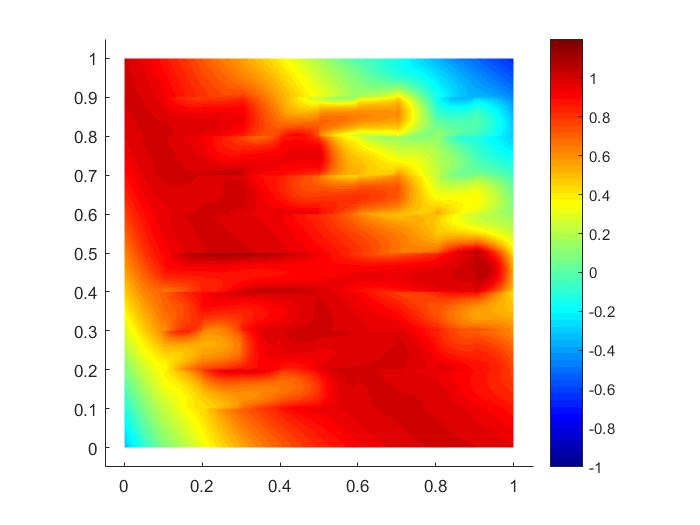}
		\title{$u_\text{poly} \text{ with } Q_s=Q_2$}
	\end{minipage}%
	\caption{Comparing $u_H$ with $u_\text{poly}$ in Example 1.}
	\label{compare polynomial}
\end{figure}

\subsection{Example 2}
In our second example, we also use CG in coarse cell case, take $u_0=1-xy$ and $g=1$. The velocity field $\mathbf{v}=(v_1,v_2)$ is the same as that in Example 1.
In Table \ref{ex2 error table }, we present the errors for using various choices of number of basis functions. 
We clearly see that, with a very small snapshot ratio, our method is able to obtain solutions with very good accuracy. 
Furthermore, we observe a faster decay of the error when smaller number of basis functions are used. This confirm the fast decay of eigenvalues
in the regime of smaller numbers of basis functions.
In Figures \ref{ex2 solution}, we present the fine and multiscale solutions
at the time $t=0.08$. We observe very good agreement of both solutions.


\begin{table}[ht]
	\centering 
	\begin{tabular}{|c|c|c|c|c|}
		\hline
		\hline
		$L$ & dim$(V_H^{(n)})$ & snapshot ratio & $e_1$ & $e_2$  \\
		\hline
		1 &100 & 0.45\%  &  44.82\% & 46.94\% \\
		\hline
		3 &300 & 1.34\%  &  3.96\% & 5.72\% \\
		\hline
		5 &500 & 2.24\%  &  3.39\% & 4.92\% \\
		\hline
		7 &700 & 3.13\%  &  2.28\% & 3.10\%  \\
		\hline
		10 &1000 & 4.47\%  &  1.97\% & 2.74\% \\
		\hline
		15 &1500 & 6.71\%  &  1.43\% & 2.21\% \\
		\hline
		20 &2000 & 8.94\% &  1.29\% & 1.86\% \\
		\hline
		25 &2500 & 11.18\% &  1.10\% & 1.65\% \\
		\hline
		30 &3000 & 13.41\%  &  1.02\% & 1.49\% \\
		\hline
	\end{tabular}
	\caption{Errors for Example 2 (dim$(V_h^{(n)})$=72600 and dim$(V_\text{snap}^{(n)})$=22365 for each time step $n$).}
	\label{ex2 error table }
\end{table}

\begin{figure}[ht]
	\begin{minipage}[t]{0.45\textwidth}
		\centering
		\includegraphics[width=3in]{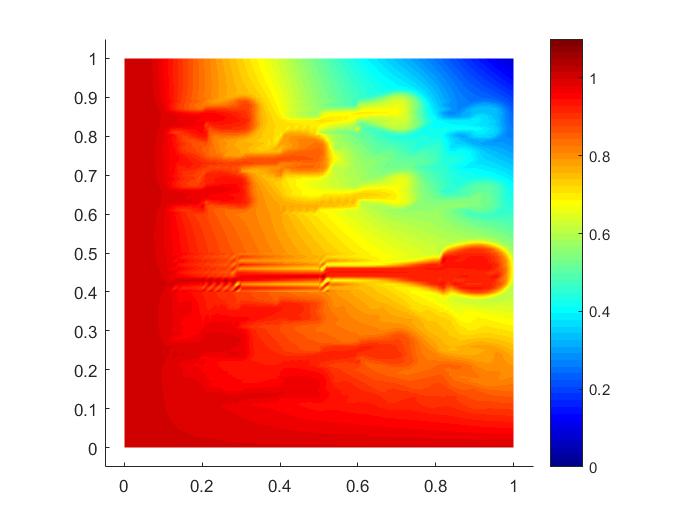}
		\title{$u_h$}
	\end{minipage}%
	\begin{minipage}[t]{0.45\textwidth}
		\centering
		\includegraphics[width=3in]{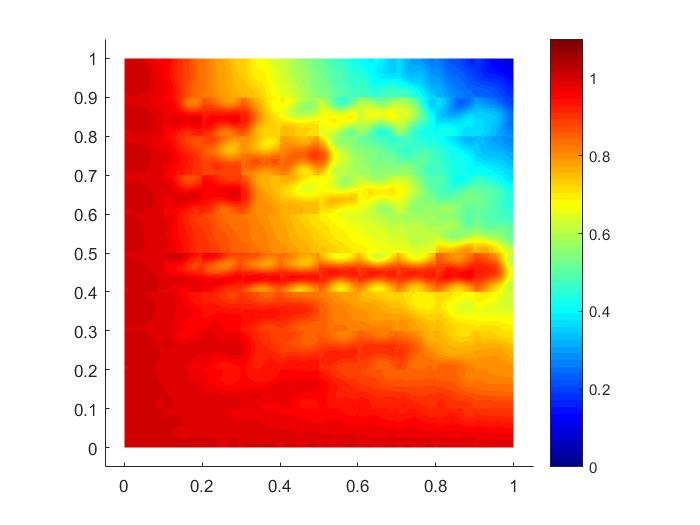}
		\title{$u_H \text{ with } L=10$}
	\end{minipage}\\%
	\begin{minipage}[t]{0.45\textwidth}
		\centering
		\includegraphics[width=3in]{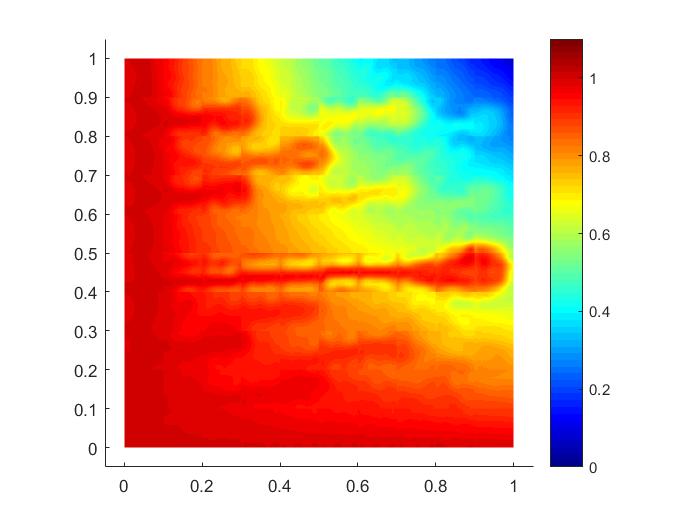}
		\title{$u_H \text{ with } L=20$}
	\end{minipage}%
	\begin{minipage}[t]{0.45\textwidth}
	\centering
	\includegraphics[width=3in]{ex2_UmsL20.jpg}
	\title{$u_H \text{ with } L=30$}
\end{minipage}%
	\caption{$u_h$ and $u_H$ in Example 2.}
	\label{ex2 solution}
\end{figure}

We also compare the performance of our method with the use of space-time polynomial basis functions,
and the results are presented in Table \ref{ex2 error compare polynomial}
and Figures \ref{ex2 compare polynomial}.
We observe similar conclusions as in the first example.
In particular, we see that the multiscale basis functions give more accurate solutions
compared with the polynomial basis functions when the same numbers of basis functions are used. 
We also see from Tables \ref{ex2 error table } and \ref{ex2 error compare polynomial}
that multiscale basis functions give faster error decay. 
For the $e_1$ error of about $3.8\%$, our multiscale method needs only $3$ basis functions
while the use of polynomial needs $8$ basis functions. Besides, for 
the $e_1$ error of about $2.3\%$, our multiscale method needs only $7$ basis functions
while the use of polynomial needs $27$ basis functions.
So, we see the rapid decay of error by using multiscale basis functions.

\begin{table}[ht]
	\centering 
	\begin{tabular}{|c|c|c|}
		\hline
		\hline
		& $e_1$ & $e_2$  \\
		\hline
		Multiscale basis with $L=8$  &  2.26\% & 3.11\% \\
		\hline
		Polynomial basis with $Q_s=Q_1$  &  3.85\% & 5.62\% \\
		\hline
		\hline
		Multiscale basis with $L=27$  &  1.07\% & 1.59\% \\
		\hline
		Polynomial basis with $Q_s=Q_2$  &  2.46\% & 3.23\%  \\
		\hline
	\end{tabular}
	\caption{Comparing the use of multiscale and polynomial basis functions for Example 2.}
	\label{ex2 error compare polynomial}
\end{table}

\begin{figure}[ht]
	\begin{minipage}[t]{0.45\textwidth}
		\centering
		\includegraphics[width=3in]{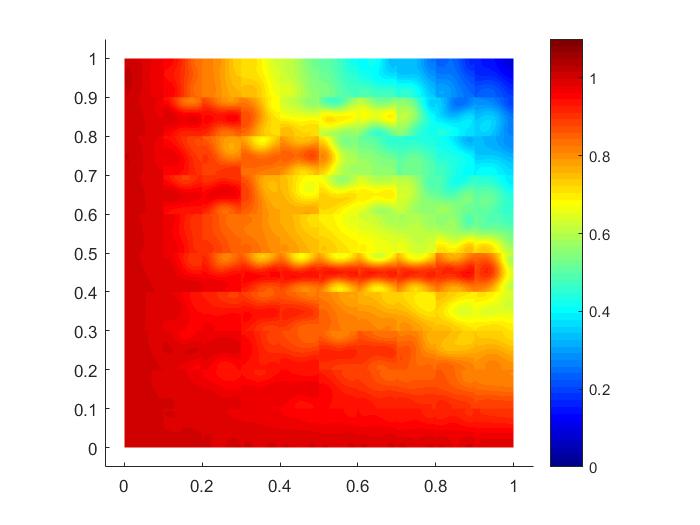}
		\title{$u_H \text{ with } L=8$}
	\end{minipage}%
	\begin{minipage}[t]{0.45\textwidth}
		\centering
		\includegraphics[width=3in]{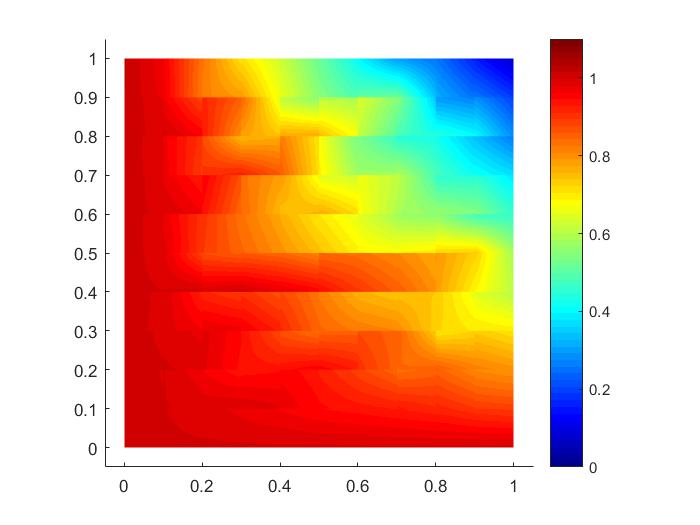}
		\title{$u_\text{poly} \text{ with } Q_s=Q_1$}
	\end{minipage}\\%
	\begin{minipage}[t]{0.45\textwidth}
		\centering
		\includegraphics[width=3in]{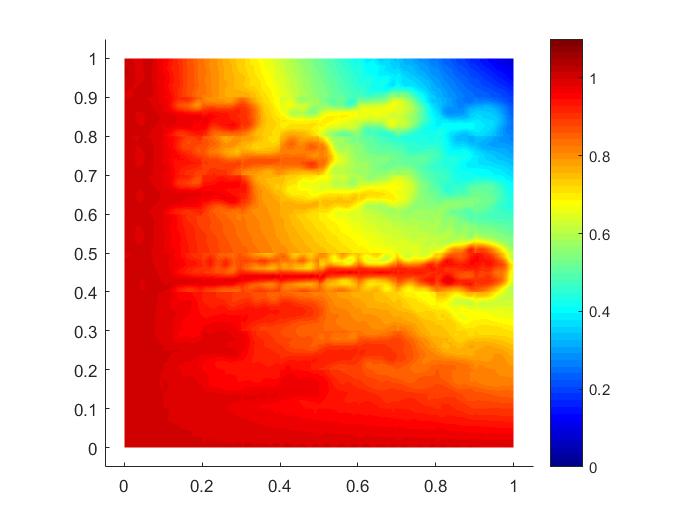}
		\title{$u_H \text{ with } L=27$}
	\end{minipage}%
	\begin{minipage}[t]{0.45\textwidth}
		\centering
		\includegraphics[width=3in]{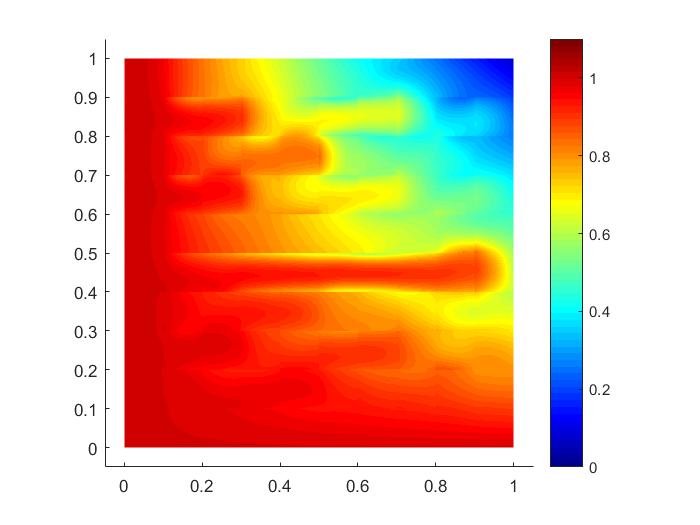}
		\title{$u_\text{poly} \text{ with } Q_s=Q_2$}
	\end{minipage}%
	\caption{Comparing $u_H$ with $u_\text{poly}$ in Example 2.}
	\label{ex2 compare polynomial}
\end{figure}

\section{\textbf{Conclusion}}

In this paper, we consider the construction of the space-time GMsFEM to solve time dependent 
transport equation with heterogeneous velocity field. 
To our best knowledge, this is a first attempt to generate 
space-time multiscale basis functions for convection problems,
that are known to be challenging because of strong distant effects.
Our main objective is to develop systematic multiscale model reduction 
techniques in space-time cells by constructing local (in space-time) 
multiscale basis functions. The proposed concepts can be
used for other applications, where one needs space-time multiscale basis functions.
{\color{black}{
Our approach focuses on
(1) constructing space-time snapshot
vectors, 
(2) performing appropriate t
local spectral decomposition in the snapshot space. 
For  snapshot vectors, we
solve local problems in local space-time domains. 
A complete snapshot space includes solutions with all
possible boundary and initial conditions. 
Local spectral decomposition is derived from the analysis.
We present a convergence analysis of the proposed method
and show that one can obtain a stable and robust multiscale discretization.
Several
numerical examples are presented. 
We consider examples where the velocity
fields are highly heterogeneous in the space. With only spatial multiscale
basis functions are used, we will need  a large dimensional space. 
The space-time multiscale space allows reducing the degrees of freedom.
Our numerical results show that one can
obtain accurate solutions.
}}
Though the presented results are promising, there is a room for further 
improvements. In particular, we will seek more accurate multiscale basis
functions and develop online approaches \cite{chung2015residual}. The main
idea of online approaches is to add multiscale basis functions
using the residual information. With appropriate offline spaces,
one can achieve a fast convergence with online basis functions. This
will be studied in our future work.


\section{Acknowledgements}

The authors are grateful to Wing Tat Leung for many helpful suggestions.

\bibliographystyle{plain}
\bibliography{references,references1,references2,references_outline}

\end{document}